%CompleteSRGraphRicciCurv
%Vincent Bonini
%May 09, 2019

\documentclass[12pt]{amsart}
\usepackage[left=1in,right=1in,top=1in,bottom=1in]{geometry}
\usepackage{latexsym,amsmath,enumerate,amssymb,epsf,graphicx, comment,appendix,amsthm}
\usepackage{mathrsfs}
\usepackage{tikz}
\usetikzlibrary{arrows.meta}
\usepackage{todonotes}
\usepackage{cite}

\usepackage{mathtools}
\DeclarePairedDelimiter{\ceil}{\lceil}{\rceil}

\newcommand{\N}{\mathbb{N}}

\newcommand{\R}{\mathbb{R}}

\DeclareRobustCommand{\rchi}{{\mathpalette\irchi\relax}}
\newcommand{\irchi}[2]{\raisebox{\depth}{$#1\chi$}}

\newcommand{\Ric}{\operatorname{Ric}}

\theoremstyle{plain}\newtheorem{theorem}{Theorem}[section]
\theoremstyle{plain}\newtheorem{lemma}[theorem]{Lemma}
\theoremstyle{plain}
\theoremstyle{plain}\newtheorem{corollary}[theorem]{Corollary}
\theoremstyle{plain}\newtheorem{conjecture}[theorem]{Conjecture}

\theoremstyle{remark}
\newtheorem{remark}[theorem]{Remark}

\theoremstyle{definition}
\newtheorem{definition}[theorem]{Definition}

\numberwithin{equation}{section}

\title[Condensed Ricci Curvature]{Condensed Ricci Curvature of Complete and Strongly Regular Graphs}

\author[Bonini]{Vincent Bonini}

\author[Carroll]{Conor Carroll}

\author[Dinh]{Uyen Dinh}

\author[Dye]{Sydney Dye}

\author[Frederick]{\\Joshua Frederick}

\author[Pearse]{Erin Pearse}

\begin{document}

\maketitle

%%%%%%%%%%%%%%%%%%%%%%%%%%%%%%%%%%%%%%%%%%%%%%%%%%%%%%%%%%%%%%%%%%%%%%%%%%%%%%%%%%%%%%%%%%%%

\begin{abstract}
We study a modified notion of Ollivier's coarse Ricci curvature on graphs introduced by Lin, Lu, and Yau in \cite{LLY1}. We establish a rigidity theorem for complete graphs that shows a connected finite simple graph is complete if and only if the Ricci curvature is strictly greater than one. We then derive explicit Ricci curvature formulas for strongly regular graphs in terms of the graph parameters and the size of a maximal matching in the core neighborhood. As a consequence we are able to derive exact Ricci curvature formulas for strongly regular graphs of girth 4 and 5 using elementary means. An example is provided that shows there is no exact formula for the Ricci curvature for strongly regular graphs of girth $3$ that is purely in terms of graph parameters.
\end{abstract}

%%%%%%%%%%%%%%%%%%%%%%%%%%%%%%%%%%%%%%%%%%%%%%%%%%%%%%%%%%%%%%%%%%%%%%%%%%%%%%%%%%%%%%%%%%%%

\section{Introduction}\label{Sec:Intro}
Lott, Villani \cite{LottVillani} and Sturm \cite{Sturm} discovered a relationship between optimal transport and Ricci curvature on smooth Riemannian manifolds and they pursued a construction of a synthetic notion of Ricci curvature that could be defined independently of differentiable structures. Ollivier \cite{Ollivier,Ollivier2} later introduced a notion of coarse Ricci curvature for Markov chains on metric spaces which has a particularly accessible formulation on graphs. Much work has been done on coarse Ricci curvature on graphs (cf. \cite{BauerJostLiu,LLY1,LLY2,JostLiu,Smith,BhatMuk,BCLMP,Radek}).

Ollivier's coarse Ricci curvature on graphs is defined in terms of transport distance of probability measures (see Section \ref{Sec:TransportDist}). We study a modified notion of Ollivier's coarse Ricci curvature on graphs introduced by Lin, Lu, and Yau in \cite{LLY1}. In this paper we refer to the modified Ricci curvature of \cite{LLY1} as the condensed Ricci curvature. Our first result concerning the condensed Ricci curvature exploits the relationship between Ricci curvature and the eigenvalues of the spectral graph Laplacian to establish a rigidity theorem for complete graphs. This result is stated without proof in \cite[Ex. 1]{LLY1}.

\begin{theorem}\label{Thm:CompleteGraphsIntro}
A connected finite simple graph $G = (V, E)$ is complete if and only if the condensed Ricci curvature $\Bbbk(x,y) > 1$ for all edges $xy \in E$. In this case, $\Bbbk(x,y) = \frac{n}{n-1}$ for all vertices $x,y \in V$.
\end{theorem}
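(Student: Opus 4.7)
The plan is to establish the two implications separately: the ``only if'' direction by a direct computation on $K_n$, and the ``if'' direction by combining a classical Rayleigh quotient argument with the eigenvalue comparison theorem of \cite{LLY1}.

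For the forward implication, I would fix an arbitrary pair of adjacent vertices $x, y$ in $K_n$ and extract $\Bbbk(x,y)$ from the definition as a limit of Ollivier curvatures at lazy parameter $\alpha$. Because every vertex of $K_n$ has degree $n-1$, the two lazy probability measures based at $x$ and $y$ agree on $V \setminus \{x, y\}$, each distributing mass $(1-\alpha)/(n-1)$ uniformly across the $n-2$ common vertices. The only excess mass lies at $x$ and at $y$, of magnitude $(n\alpha - 1)/(n-1)$ once $\alpha > 1/n$, and because $x \sim y$ the transport cost equals that excess. This yields an Ollivier curvature of $n(1-\alpha)/(n-1)$, and dividing by $1-\alpha$ and passing to the limit gives $\Bbbk(x,y) = n/(n-1) > 1$ as required.

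For the reverse implication I would argue contrapositively using the normalized graph Laplacian $\mathcal{L} = I - D^{-1/2} A D^{-1/2}$. The central tool is the eigenvalue comparison theorem of \cite{LLY1}, which states that the smallest positive eigenvalue $\lambda_1$ of $\mathcal{L}$ satisfies $\lambda_1 \geq \min_{xy \in E}\Bbbk(x,y)$. If $G$ were connected but not complete there would exist non-adjacent $u, v \in V$, and I would test the Rayleigh quotient on the vector $f = \sqrt{d_v}\,\mathbf{1}_u - \sqrt{d_u}\,\mathbf{1}_v$. This vector is orthogonal to the Perron eigenvector $D^{1/2}\mathbf{1}$ of $\mathcal L$ by a direct inner-product check, and because $u \not\sim v$ the off-diagonal entries of $\mathcal L$ connecting $u$ and $v$ vanish, which reduces the calculation to $\langle f, \mathcal L f\rangle = d_u + d_v = \|f\|^2$. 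The Rayleigh--Ritz principle then gives $\lambda_1 \leq 1$. Were every edge to satisfy $\Bbbk > 1$, the finiteness of $E$ would force $\min_{xy\in E}\Bbbk(x,y) > 1$ and hence $\lambda_1 > 1$, a contradiction. Therefore $G$ must be complete, and the explicit value $n/(n-1)$ is supplied by the forward computation.

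The principal obstacle is the spectral step: one must both invoke the LLY lower bound correctly and identify a test function that extracts the classical upper bound $\lambda_1 \leq 1$ whenever some pair of vertices fails to be adjacent. The forward computation is elementary but still requires some care in verifying that the candidate coupling is optimal and that $\alpha$ is close enough to $1$ for the excess masses to carry the correct signs before taking the limit.
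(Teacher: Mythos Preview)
Your proposal is correct and follows essentially the same route as the paper: a direct transport computation on $K_n$ for one direction, and the LLY Lichnerowicz inequality $\lambda_1 \geq \min_{xy\in E}\Bbbk(x,y)$ together with the spectral fact that $\lambda_1 \leq 1$ for non-complete connected graphs for the other. The only cosmetic differences are that the paper evaluates at the fixed idleness $\varepsilon=\tfrac12$ (using piecewise linearity) rather than passing to a limit, and it quotes Chung for the spectral bound whereas you supply the self-contained Rayleigh-quotient argument.
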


We then turn our attention to the derivation of exact formulas for the condensed Ricci curvature on strongly regular graphs. Explicit formulas and curvature bounds for various forms of Ricci curvature have been established for large classes of graphs (cf. \cite{LLY1,LLY2,JostLiu,Smith,BhatMuk,bakryemery}). In particular, Bakry-{\'E}mery Ricci curvature functions are studied in \cite{bakryemery} and explicit formulas for the Bakry-{\'E}mery Ricci curvature are derived for Cayley graphs and strongly regular graphs of girths $4$ and $5$. This work served as our inspiration to establish explicit formulas for the condensed Ricci curvature on strongly regular graphs using elementary means.

Let $G = (V, E)$ be a strongly regular graph with parameters $(n, d, \alpha, \beta)$. Here $n=|V|$ is the number of vertices, $d$ is the uniform degree of the vertices, $\alpha \geq 0$ is the number of common neighbors for adjacent vertices, and $\beta \geq 1$ is the number of common neighbors for nonadjacent vertices. Given an edge $xy \in E$, let $N_x$ denote the set of vertices that are adjacent to $x$, not including $y$ or any vertices that are adjacent to $y$. Similarly, let $N_y$ denote the set of vertices that are adjacent to $y$, not including $x$ or any vertices that are adjacent to $x$ (see Section \ref{Sec:TransportDist} for more details). We compute the following curvature formulas solely from properties of maximum matchings and the regularity properties of strongly regular graphs.

\begin{theorem}\label{Thm:SRGmatchingIntro}
Let $G = (V, E)$ be strongly regular graph with parameters $(n, d, \alpha, \beta)$. Suppose $xy \in E$ with maximum matching $\mathcal{M}$ of size $m$ between $N_x$ and $N_y$. Then the condensed Ricci curvature
\begin{equation} \label{Eqn:CurvFormulaIntro}
\Bbbk (x, y) = \frac{\alpha + 2}{d} - \frac{|N_x| - m}{d}.
\end{equation}
\end{theorem}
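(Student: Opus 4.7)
The approach is via the Kantorovich--Rubinstein dual characterization of the Wasserstein $W_1$ distance between the Lin--Lu--Yau probability measures $\mu_x^\varepsilon$ and $\mu_y^\varepsilon$. Setting $t = 1 - \varepsilon$, the plan is to compute $W_1(\mu_x^\varepsilon, \mu_y^\varepsilon)$ exactly for all sufficiently small $t > 0$, then read off $\Bbbk(x,y) = \lim_{t \to 0^+}(1 - W_1)/t$. Partitioning $N(x) = \{y\} \cup C \cup N_x$ and $N(y) = \{x\} \cup C \cup N_y$, with $C$ the set of $\alpha$ common neighbors, gives $|N_x| = |N_y| = d - 1 - \alpha$ and makes the excess of $\mu_x^\varepsilon$ over $\mu_y^\varepsilon$ explicit: mass $1 - t(d+1)/d$ at $x$ and mass $t/d$ at each $v \in N_x$, with matching deficits at $y$ and on $N_y$ and complete cancellation on $C$.

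For the upper bound on $W_1$, I would use the transport plan that (i) ships $1 - t(d+1)/d$ units directly from $x$ to $y$ at cost $1$, (ii) ships $t/d$ units across each of the $m$ matched edges in $\mathcal{M}$ at cost $1$, and (iii) pairs the $|N_x| - m$ unmatched vertices of $N_x$ bijectively with the equally many unmatched vertices of $N_y$, shipping $t/d$ units along each such pair at cost $2$. The cost-$2$ factor uses two observations: maximality of $\mathcal{M}$ forces every pair of unmatched vertices on opposite sides to be non-adjacent (else $\mathcal{M}$ could be enlarged), and in a strongly regular graph with $\beta \ge 1$ any two non-adjacent vertices are at graph distance exactly $2$. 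Summation yields
\[
W_1(\mu_x^\varepsilon,\mu_y^\varepsilon) \;\le\; 1 + \frac{t\,(d - 3 - 2\alpha - m)}{d}.
\]

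For the matching lower bound I would invoke K\"onig's theorem on the bipartite graph between $N_x$ and $N_y$ induced by $E$, producing a minimum vertex cover $X \cup Y$ with $X \subseteq N_x$, $Y \subseteq N_y$, and $|X| + |Y| = m$. Define $f \colon V \to \R$ by $f(x) = 0$, $f(y) = -1$, $f \equiv 0$ on $C \cup X \cup Y$, $f \equiv 1$ on $N_x \setminus X$, and $f \equiv -1$ on $N_y \setminus Y$; extend $1$-Lipschitz to the rest of $V$ by McShane--Whitney. Across every adjacent pair $|f(a)-f(b)| \le 1$ except possibly between $N_x \setminus X$ and $N_y \setminus Y$, but an edge joining those two blocks would violate the covering property of $X \cup Y$, so no such edge exists and $f$ is $1$-Lipschitz on the support of $\mu_x^\varepsilon + \mu_y^\varepsilon$. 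Computing $\int f\,d(\mu_x^\varepsilon - \mu_y^\varepsilon)$ using cancellation on $C$ together with $|N_x| = |N_y|$ and $|X|+|Y| = m$ yields exactly $1 + t(d - 3 - 2\alpha - m)/d$, so by Kantorovich--Rubinstein duality the upper bound is an equality for small $t$. Dividing $1 - W_1$ by $t$, letting $t \to 0^+$, and substituting $d - 1 - \alpha = |N_x|$ collapses the answer to the stated $\Bbbk(x,y) = (\alpha+2)/d - (|N_x|-m)/d$.

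The main obstacle is producing the correct $1$-Lipschitz witness. It is tempting to set $f \equiv 1$ on all of $N_x$ and $f \equiv -1$ on all of $N_y$, but maximality of $\mathcal{M}$ alone does not preclude an edge between a matched vertex in $N_x$ and an unmatched vertex in $N_y$, so this naive choice may fail the Lipschitz test at graph distance $1$. The K\"onig vertex cover pinpoints exactly which vertices can carry the extreme values $\pm 1$, and once that structural input is in place the rest of the argument reduces to the algebraic identity above.
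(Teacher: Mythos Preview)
Your argument is correct, and the transport-plan half coincides with the paper's: both ship the excess $1 - t(d+1)/d$ from $x$ to $y$, move $t/d$ across each matching edge, and pair the leftover unmatched vertices at cost $2$ using the diameter-$2$ property. The difference lies in the dual witness. The paper builds its Kantorovich potential from the alternating-path structure of the maximum matching: it sets $f=1$ on $\{x\}\cup\bigl(N_x\setminus A_{N_y}(N_x)\bigr)$ and $f=-1$ on $A_{N_y}(N_y)$, then invokes Berge's Lemma and two ad hoc lemmas (one showing no edges run from $A_{N_y}(N_y)$ to $N_x\setminus A_{N_y}(N_x)$, the other establishing the count $|A_{N_y}(N_y)|=|A_{N_y}(N_x)|+|N_y|-m$) to verify the Lipschitz condition and evaluate the sum. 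Your route via K\"onig's theorem is cleaner: a minimum vertex cover $X\cup Y$ of size $m$ immediately furnishes the ``safe'' sets $N_x\setminus X$ and $N_y\setminus Y$ on which $f$ can take the extreme values $\pm 1$, and the single identity $|X|+|Y|=m$ replaces the paper's counting lemma. In effect the paper's alternating-path construction is the standard constructive proof of K\"onig's theorem unrolled inline, so the two potentials are close cousins---yours just names the underlying combinatorial fact and moves on. One small point worth making explicit: the McShane--Whitney step requires $f$ to be $1$-Lipschitz for the ambient graph metric on its domain, not merely across adjacent pairs; since $f$ takes values in $\{-1,0,1\}$, this reduces to checking that every pair with $f$-gap $2$ (namely $N_x\setminus X$ versus $\{y\}\cup(N_y\setminus Y)$) is at graph distance at least $2$, which you have.
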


\begin{remark}
We would like to point out that Theorem \ref{Thm:SRGmatchingIntro} holds more generally for regular graphs of diameter $2$. In this case one takes $\alpha$ to be the number of neighbors common to both $x$ and $y$. For simplicity in our presentation, we focus on strongly regular graphs.
\end{remark}

The explicit formula $\Bbbk (x, y) = \frac{\alpha + 2}d$ was previously established in \cite[Theorem 6.3]{Smith} for general graphs with perfect matchings between $N_x$ and $N_y$. In the setting of strongly regular graphs Theorem~\ref{Thm:SRGmatchingIntro} is thus an extension of the results of \cite{Smith} to the case of maximum matchings. These results give insight into the importance of understanding matchings between the neighbor sets $N_x$ and $N_y$ in the derivation of exact Ricci curvature formulas. In particular, we are able to exploit this idea to derive the explicit formulas for the Ricci curvature of strongly regular graphs of girths $4$ and $5$ using elementary means.

For strongly regular graphs $G = (V, E)$ of girth $5$, it follows that adjacent vertices share $\alpha = 0$ common neighbors. Moreover, for a given edge $xy \in E$, there are no edges between the neighbor sets $N_x$ and $N_y$. Then, as a simple consequence of Theorem \ref{Thm:SRGmatchingIntro}, we have the following Ricci curvature formula.

\begin{theorem} \label{Thm:SRGgirth5intro}
Let $G = (V, E)$ be strongly regular graph with parameters $(n, d, \alpha, \beta)$. If the girth of $G$ is $5$, then condensed Ricci curvature
$$\Bbbk(x ,y) = \frac{3}{d} - 1$$ 
for all $xy \in E$.
\end{theorem}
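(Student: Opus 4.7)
The plan is to specialize Theorem~\ref{Thm:SRGmatchingIntro} by reading off the three ingredients $\alpha$, $|N_x|$, and $m$ directly from the girth~$5$ hypothesis; once these are pinned down, a one-line substitution into \eqref{Eqn:CurvFormulaIntro} delivers the formula.

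First I would argue that $\alpha = 0$. Since girth~$5$ forbids triangles, no neighbor of $x$ can also be a neighbor of $y$, so adjacent vertices share no common neighbors. Given this, every neighbor of $x$ other than $y$ automatically satisfies the condition of not being adjacent to $y$, so $N_x$ consists of exactly the $d-1$ neighbors of $x$ distinct from $y$; in particular $|N_x| = d - 1$.

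Next I would show the maximum matching $\mathcal{M}$ between $N_x$ and $N_y$ is empty, that is, $m = 0$. If there were any edge $ab$ with $a \in N_x$ and $b \in N_y$, then $x\to a \to b \to y \to x$ would be a cycle of length $4$. The four vertices are distinct: $x \ne y$ by assumption, $a\ne y$ and $b\ne x$ by membership in $N_x$, $N_y$, and $a\ne b$ because $a$ is not adjacent to $y$ while $b$ is. This contradicts girth~$5$, so no edges exist between $N_x$ and $N_y$ and every matching is empty.

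Substituting $\alpha = 0$, $|N_x| = d-1$, and $m = 0$ into \eqref{Eqn:CurvFormulaIntro} gives
\begin{equation*}
\Bbbk(x,y) = \frac{0 + 2}{d} - \frac{(d-1) - 0}{d} = \frac{3-d}{d} = \frac{3}{d} - 1.
\end{equation*}
There is essentially no obstacle in this argument: the only subtle point is the distinctness check for the hypothetical $4$-cycle, which is immediate from how $N_x$ and $N_y$ are defined. All of the real work has already been absorbed into Theorem~\ref{Thm:SRGmatchingIntro}.
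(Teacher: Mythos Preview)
Your proof is correct and follows essentially the same route as the paper's: both derive $\alpha=0$ from the absence of triangles, show $m=0$ by observing that any edge between $N_x$ and $N_y$ would close a $4$-cycle, and then substitute into Theorem~\ref{Thm:SRGmatchingIntro}. Your explicit distinctness check for the hypothetical $4$-cycle is a nice touch that the paper leaves implicit.
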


From Theorem \ref{Thm:SRGgirth5intro} it is easy to see that the condensed Ricci curvature along any edge of the $5$-cycle $C_5$ is $\Bbbk = \frac{1}{2}$ while the condensed Ricci curvature along any edge of the Peterson graph is $\Bbbk = 0$. The fact that these are the only nonnegatively curved strongly regular graphs of girth $5$ then follows from Theorem \ref{Thm:SRGgirth5intro} and the classification of Moore graphs of diameter $2$ due to Hoffman and Singleton \cite{HS}.

\begin{corollary} \label{Cor:SRGgirth5NegativeCurvIntro}
The $5$-cycle and the Petersen graph are the only strongly regular graphs of girth $5$ with nonnegative condensed Ricci curvature along edges. 
\end{corollary}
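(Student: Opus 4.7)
The plan is to use Theorem~\ref{Thm:SRGgirth5intro} to reduce nonnegativity of the condensed Ricci curvature to a bound on the degree, and then invoke the Hoffman--Singleton classification to enumerate the remaining cases.

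First, I would observe that Theorem~\ref{Thm:SRGgirth5intro} gives $\Bbbk(x,y) = \frac{3}{d} - 1$ along every edge of a strongly regular graph of girth $5$, so the condition $\Bbbk(x,y) \geq 0$ is equivalent to $d \leq 3$. The work is therefore to classify strongly regular graphs of girth $5$ with $d \in \{2, 3\}$.

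Next, I would identify the parameters. Girth $5$ forces no triangles, hence $\alpha = 0$, and forbids $4$-cycles, so any two nonadjacent vertices share at most one common neighbor. Since the graph is strongly regular and (being connected of diameter $2$) has $\beta \geq 1$, we must have $\beta = 1$. Thus the parameters are $(n, d, 0, 1)$, and a standard double-counting argument gives $n = d^{2} + 1$. These are precisely the Moore graphs of diameter $2$.

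The main step is then to quote the Hoffman--Singleton classification from \cite{HS}: a Moore graph of diameter $2$ exists only for $d \in \{2, 3, 7, 57\}$. The values $d = 7$ and $d = 57$ give $\Bbbk = \tfrac{3}{7} - 1 < 0$ and $\Bbbk = \tfrac{3}{57} - 1 < 0$, respectively, so they are excluded. For $d = 2$, the unique such graph is the $5$-cycle $C_5$, with $\Bbbk = \tfrac{1}{2}$; for $d = 3$, the unique such graph is the Petersen graph, with $\Bbbk = 0$. These are the only strongly regular graphs of girth $5$ admitting nonnegative condensed Ricci curvature along edges, which is the claim.

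The entire argument is a routine reduction once Theorem~\ref{Thm:SRGgirth5intro} is in hand; there is no genuine obstacle, since the heavy lifting is outsourced to Hoffman--Singleton. The only subtlety is the short parameter argument showing that a strongly regular graph of girth $5$ is necessarily a Moore graph, and care should be taken to handle the degenerate case $d = 2$ (where the graph has diameter $2$ but only five vertices) consistently with the statement.
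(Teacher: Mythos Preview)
Your proposal is correct and matches the paper's approach exactly: the paper states (without further detail) that the corollary follows from Theorem~\ref{Thm:SRGgirth5intro} together with the Hoffman--Singleton classification of Moore graphs of diameter $2$, which is precisely the reduction you carry out. Your write-up simply makes explicit the parameter identification $(\alpha,\beta)=(0,1)$ and the inequality $d\leq 3$ that the paper leaves implicit.
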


For strongly regular graphs $G = (V, E)$ of girth $4$, it follows that adjacent vertices share $\alpha = 0$ common neighbors. For a given edge $xy \in E$, we appeal to Hall's theorem and the pigeonhole principle to establish a perfect matching between the neighbor sets $N_x$ and $N_y$. Then, as a consequence of Theorem \ref{Thm:SRGmatchingIntro}, we have the following Ricci curvature formula.

\begin{theorem} \label{Thm:SRGgirth4intro}
Let $G = (V, E)$ be strongly regular graph with parameters $(n, d, \alpha, \beta)$. If the girth of $G$ is $4$, then condensed Ricci curvature
$$\Bbbk(x ,y) = \frac{2}{d}$$
for all $xy \in E$. In particular, all strongly regular graphs of girth $4$ have positive condensed Ricci curvature.
\end{theorem}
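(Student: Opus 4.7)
The plan is to reduce everything to the matching formula of Theorem~\ref{Thm:SRGmatchingIntro} and then show that a perfect matching between $N_x$ and $N_y$ always exists when $G$ has girth $4$. Since $G$ has girth $4$, adjacent vertices share no common neighbor, so $\alpha = 0$. Theorem~\ref{Thm:SRGmatchingIntro} then reduces to
\[
\Bbbk(x,y) \;=\; \frac{2}{d} - \frac{|N_x|-m}{d},
\]
and with $\alpha=0$ one computes $|N_x| = d - 1 - \alpha = d - 1$ and, symmetrically, $|N_y| = d-1$. So the proof reduces to showing $m = d-1$, i.e., that the bipartite graph $B$ induced between $N_x$ and $N_y$ admits a perfect matching.

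The key structural step is to count, for an arbitrary $u \in N_x$, its neighbors in $N_y$. By definition $u \sim x$, $u \ne y$, and $u \not\sim y$, so $u$ and $y$ are nonadjacent; strong regularity gives them exactly $\beta$ common neighbors. One of these common neighbors is $x$ itself. For any other common neighbor $v \ne x$, I note that $v \sim y$ automatically, and $v \not\sim x$ because otherwise $v$ would be a common neighbor of the adjacent pair $\{x,y\}$, contradicting $\alpha = 0$. Hence the remaining $\beta - 1$ common neighbors of $u$ and $y$ all lie in $N_y$, and conversely any neighbor of $u$ in $N_y$ is such a common neighbor. Thus every $u \in N_x$ has exactly $\beta - 1$ neighbors in $N_y$; by the symmetric argument the same holds for every $v \in N_y$. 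Consequently $B$ is $(\beta-1)$-regular.

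Next I would observe that the girth $4$ hypothesis forces $\beta \ge 2$: the existence of a $4$-cycle $u_1u_2u_3u_4$ exhibits a nonadjacent pair $\{u_1,u_3\}$ with at least two common neighbors, so $\beta \ge 2$ and $\beta - 1 \ge 1$. Now Hall's condition for $B$ is immediate by a pigeonhole argument: for any $S \subseteq N_x$ the edges from $S$ into $N(S) \cap N_y$ number exactly $(\beta-1)|S|$, while each vertex of $N_y$ receives at most $\beta - 1$ such edges, forcing $|N(S)\cap N_y| \ge |S|$. Hall's theorem then produces a perfect matching, so $m = |N_x| = d - 1$, giving $\Bbbk(x,y) = 2/d$.

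The main obstacle is the regularity computation of $B$: one must carefully track the parameters so that the common neighbors of $u\in N_x$ with $y$ land in $N_y$ rather than leak into $N(x)\cup\{x,y\}$, and this is precisely where $\alpha=0$ (the girth $4$ hypothesis) is essential. Once $B$ is shown to be $(\beta-1)$-regular with $\beta\ge 2$, the matching is routine.
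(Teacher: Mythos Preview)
Your proposal is correct and follows essentially the same approach as the paper: show that the bipartite graph between $N_x$ and $N_y$ is $(\beta-1)$-regular with $\beta\ge 2$, verify Hall's condition by a pigeonhole/double-counting argument, and then invoke Theorem~\ref{Thm:SRGmatchingIntro} with $\alpha=0$ and $m=|N_x|$. The only cosmetic differences are that you justify $\beta\ge 2$ directly from a $4$-cycle (the paper cites this) and phrase the Hall verification as a direct count rather than by contradiction.
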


For strongly regular graphs with girth $3$, it turns out that there does not exist an exact formula for the condensed Ricci curvature solely in terms of the graph parameters. This is due to the fact that there are nonisomorphic girth $3$ strongly regular graphs with the same graph parameters but different Ricci curvatures. For example, the $4 \times 4$ Rook's graph and the Shrikhande graph both have parameters $(16,6,2,2)$ but their condensed Ricci curvatures are $\Bbbk=\frac23$ and $\Bbbk=\frac13$, respectively. This is due to the fact that for a given edge $xy$ there is a perfect matching between neighbor sets $N_x$ and $N_y$ for the Rook's graph but the maximum matching between $N_x$ and $N_y$ is size $m=1$ for the Shrikhande graph.

On the other hand, strongly regular conference graphs exhibit many symmetry properties that we believe always lead to perfect matchings between neighbor sets $N_x$ and $N_y$. We plan to address conference graphs in a forthcoming paper. For now we make the following conjecture for conference graphs, which can be realized as a direct consequence of Theorem \ref{Thm:SRGmatchingIntro} provided there are always perfect matchings between $N_x$ and $N_y$.

\begin{conjecture}\label{Conj:ConferenceGraphsIntro}
Let $G=(V, E)$ be a strongly regular conference graph with parameters\\ $(4\beta + 1, 2\beta, \beta -1, \beta)$ with $\beta \geq 2$. Then the condensed Ricci curvature
$$\Bbbk(x, y) = \frac12 + \frac{1}{2\beta}$$
for all $xy \in E$. In particular, all strongly regular conference graphs have positive condensed Ricci curvature strictly greater than $\frac{1}{2}$.
\end{conjecture}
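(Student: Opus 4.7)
The plan is to reduce the conjecture to a matching-existence statement and then invoke Theorem~\ref{Thm:SRGmatchingIntro}. For the conference parameters $(n,d,\alpha,\beta)=(4\beta+1, 2\beta, \beta-1, \beta)$, the neighbor set $N_x$ has size $|N_x|=d-1-\alpha=\beta$, so Theorem~\ref{Thm:SRGmatchingIntro} specializes to
\[
\Bbbk(x,y) \;=\; \frac{\alpha+2}{d} - \frac{|N_x|-m}{d} \;=\; \frac{\beta+1}{2\beta} - \frac{\beta-m}{2\beta}.
\]
When $m=\beta$, i.e., when a perfect matching exists between $N_x$ and $N_y$, this collapses to the conjectured value $\tfrac12+\tfrac{1}{2\beta}$. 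The entire task therefore reduces to establishing Hall's marriage condition for the bipartite subgraph $H$ on $N_x\sqcup N_y$ whose edges are inherited from $G$.

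The next step is to exploit the strongly regular identities to control $H$ locally. Write $\Delta$ for the $\alpha=\beta-1$ common neighbors of $x$ and $y$, and $F=V\setminus(N[x]\cup N[y])$ for the set of vertices adjacent to neither endpoint. Inclusion-exclusion gives $|F|=n-2d+\alpha=\beta$, so the four classes $N_x,\ N_y,\ \Delta,\ F$ have sizes $\beta,\ \beta,\ \beta-1,\ \beta$. For $v\in N_x$, the identities $|N(v)\cap N(x)|=\alpha$ (since $v\sim x$) and $|N(v)\cap N(y)|=\beta$ (since $v\not\sim y$), combined with the decompositions $N(x)=\{y\}\cup\Delta\cup N_x$ and $N(y)=\{x\}\cup\Delta\cup N_y$, yield
\[
|N(v)\cap N_y| \;=\; |N(v)\cap(N_x\setminus\{v\})|,
\]
so the $H$-degree of $v$ equals its number of $G$-neighbors inside $N_x$. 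A symmetric identity holds on $N_y$, and parallel counts for vertices in $\Delta$ and $F$ fix all edge counts among the four classes purely in terms of $\beta$.

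With the local edge structure pinned down, I plan to verify Hall's condition by contradiction. Supposing some $S\subseteq N_x$ violates Hall, set $T=N_y\setminus N_H(S)$, so $|T|>\beta-|S|$ and every $t\in T$ sends all of its $H$-neighbors out of $S$. Combining this forced deficiency with the edge counts between $N_y$, $\Delta$, and $F$ should overspend a strongly regular identity, for instance $\sum_{f\in F}|N(f)\cap N_y|$ or the count of triangles on edges inside $\Delta$. The hard part will be closing this final counting step, because $H$ need not be regular and Hall's condition is strictly stronger than the edge-count bound; the argument must genuinely exploit the conference relations $\alpha=\beta-1$ and $n=4\beta+1$ rather than mere regularity. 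Should a direct combinatorial argument prove elusive, the fallback is to use the simple eigenvalues $\tfrac{-1\pm\sqrt{n}}{2}$ of a conference graph and Cauchy interlacing applied to the induced subgraph on $N_x\cup N_y$, aiming for a spectral lower bound on the matching number of $H$ that forces $m=\beta$.
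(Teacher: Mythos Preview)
The statement you are attempting to prove is labeled a \emph{Conjecture} in the paper; the authors do not prove it. They say explicitly that the formula ``can be realized as a direct consequence of Theorem~\ref{Thm:SRGmatchingIntro} provided there are always perfect matchings between $N_x$ and $N_y$,'' and that they plan to address conference graphs in future work. So there is no paper proof to compare against; your reduction to the perfect-matching question is exactly the reduction the authors already record, and your arithmetic check that $m=|N_x|=\beta$ gives $\Bbbk=\tfrac12+\tfrac{1}{2\beta}$ is correct.

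The genuine gap is that your proposal does not actually establish the perfect matching. You correctly derive the local identity $|N(v)\cap N_y|=|N(v)\cap N_x|$ for $v\in N_x$, but from there you only outline a Hall-violation-plus-double-counting strategy and candidly flag that ``the hard part will be closing this final counting step.'' That is precisely the step that is open: the bipartite graph $H$ on $N_x\cup N_y$ need not be regular (the $H$-degree of $v$ equals its number of $G$-neighbors inside its own side, which can vary from vertex to vertex), so neither a straightforward pigeonhole nor a global edge count forces Hall's condition, and your fallback via interlacing is stated only as an aim, not an argument. In short, your write-up reproduces the paper's reduction and then stops at the same obstacle the authors stopped at; it is a reasonable plan of attack, but it is not yet a proof.
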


This paper is organized as follows: In Section \ref{Sec:TransportDist} we formally define transport distance between probability measures on graphs and the condensed Ricci curvature. We  briefly discuss our choice of notation and terminology and we state some key results from \cite{BCLMP,BhatMuk} that simplify the computation of condensed Ricci curvature. In Section \ref{Sec:CompleteGraphs} we introduce the spectral graph Laplacian and prove Theorem \ref{Thm:CompleteGraphsIntro}. In Section \ref{Sec:SRGraphs} we focus on strongly regular graphs and establish our exact curvature formulas.

%%%%%%%%%%%%%%%%%%%%%%%%%%%%%%%%%%%%%%%%%%%%%%%%%%%%%%%%%%%%%%%%%%%%%%%%%%%%%%%%%%%%%%%%%%%%%

\section*{Acknowledgements}
The authors were partially supported by NSF FURST grant DMS-$1620552$, the Cal Poly Frost Fund, CSU Fresno, and Cal Poly, San Luis Obispo. The authors would like to thank CSU Fresno for their hospitality during their summer participation in the FURST program.

%%%%%%%%%%%%%%%%%%%%%%%%%%%%%%%%%%%%%%%%%%%%%%%%%%%%%%%%%%%%%%%%%%%%%%%%%%%%%%%%%%%%%%%%%%%%%

\section{Transport Distance and Condensed Ricci Curvature on Graphs}\label{Sec:TransportDist}
To motivate Ollivier's formulation of coarse Ricci curvature we turn to the setting of smooth Riemannian manifolds. Suppose that $x_1$ and $x_2$ are points on an $N$-dimensional Riemannian manifold $M$ and that $m_i$ is the uniform measure on the respective geodesic ball $B(x_i,\varepsilon)$ of radius $\varepsilon$ centered at $x_i$. Let $\delta = d(x_1,x_2)$ denote the geodesic distance between $x_1$ and $x_2$. If $B(x_1,\varepsilon)$ is parallel transported to $B(x_2,\varepsilon)$ along a geodesic from $x_1$ to $x_2$ in the direction of tangent vector $v$ at $x_1$, then the transport distance $W(m_1,m_2)$ between the measures and the Ricci curvature on $M$ satisfy the relation \cite{Ollivier}
\begin{align*}
W(m_1,m_2) = \delta \left(1-\frac{\varepsilon^2}{2(N+2)} \Ric(v, v) + O(\varepsilon^3 + \varepsilon^2 \delta)\right) \quad \text{as} \ \varepsilon, \delta \to 0.
\end{align*}

As noted in \cite{Ollivier2}, this relation allows one to interpret Ricci curvature on Riemannian manifolds as a measure of the average distance traveled between points of geodesic balls versus their centers under parallel transport. Using this relationship to guide intuition, Ollivier defined the Ricci curvature of Markov chains on metric spaces
\begin{align*}
\Ric(x_1,x_2)  = 1 - \frac{W(m_1,m_2)}{d(x_1,x_2)}
\end{align*}
where $W(m_1,m_2)$ denotes the general transport or Wasserstein distance between measures $m_i$ based at $x_i$ and $d(x_1,x_2)$ is the metric distance between $x_1$ and $x_2$. This definition provides a synthetic notion of Ricci curvature of Markov chains on metric spaces and its geometric validation comes in the form of analogues of the classical theorems of Bonnet-Myers, Lichnerowicz, and Gromov-L\'evy \cite{Ollivier,Ollivier2}.

Ollivier's coarse Ricci curvature on graphs is defined in terms of transport distance of probability measures and standard graph distance \cite{Ollivier,Ollivier2}. Let $G=(V,E)$ be a nontrivial, locally finite, undirected, connected, simple graph with standard graph shortest path distance function
$$
\rho:V \times V \to  {\N} \cup \{0\}.
$$
A {\it probability measure} or {\it probability mass distribution} on $G=(V,E)$ is a real valued function $\mu : V \to [0,1]$ such that 
$$
\sum_{v \in V}\mu(v) = 1.
$$
Given probability measures $\mu$ and $\nu$ on $G$, a {\it coupling} or {\it transport plan} between $\mu$ and $\nu$ is a probability measure $\xi : V \times V \to [0, 1]$ such that
$$
\sum_{w \in V} \xi(v, w) = \mu \quad \text{and} \quad  \sum_{v \in V} \xi(v, w) = \nu.
$$
The terminology transport plan has economic origins and comes from the optimal transport problem where one interprets a coupling as a plan for transporting mass or goods from one distribution center to another.

The $L^1$-Wasserstein or transport distance is a metric on the space of probability measures on $G$ that quantifies the optimal {\it transport cost} between probability measures on $G$.

\begin{definition} \label{Def:TransportMetric}
The {\it $L^1$-Wasserstein} or {\it transport distance} between probability measures $\mu$ and $\nu$ on a graph $G=(V,E)$ is given by
$$
W(\mu,\nu) = \inf_{\xi \in {\rchi} (\mu,\nu)} \sum_{x \in V} \sum_{y \in V} \rho(x,y) \xi (x,y)
$$
where $ {\rchi}(\mu, \nu)$ is the set of all transport plans between $\mu$ and $\nu$.
\end{definition}

We consider a modified notion of Ollivier's coarse Ricci curvature on graphs introduced by Lin, Lu, and Yau in \cite{LLY1} that is based on probability measures of the form
\begin{equation} \label{Eqn:EpsilonMeasure}
m_x^{\varepsilon}(y) = 
\begin{cases} 
1 -\varepsilon & y=x,\\
\frac\varepsilon{\deg(x)} &y \in \Gamma(x), \\
0 & \text{otherwise}.
\end{cases}
\end{equation}
For $x,y \in V$ the $\varepsilon$-Ollivier Ricci curvature is then defined for $0 \leq \varepsilon \leq 1$ by
$$ 
\kappa_{\varepsilon}(x, y) = 1 - \frac{W(m_x^{\varepsilon}, m_y^{\varepsilon})}{\rho(x,y)}
$$
and the modified Ollivier Ricci curvature of is then defined for $x,y \in V$ by
$$
\Bbbk(x, y) = \lim_{\varepsilon \to 0} \frac{\kappa_{\varepsilon}(x,y)}{\varepsilon}.
$$

We refer to the modified Ricci curvature as the condensed Ricci curvature. Our definitions differ slightly from those of \cite{LLY1} in that we use $1-\varepsilon$ in place of $\alpha$ as in \cite{Smith,Radek}. The reason for this choice in notation and terminology is that we interpret the probability measure \eqref{Eqn:EpsilonMeasure} as encoding an $\varepsilon$-active random walk that becomes stationary as $\varepsilon \to 0$. One can view this phenomena as a concentration or condensation of measure. This formulation also more closely resembles the notion of coarse Ricci curvature of continuous-time Markov chains given by the derivative
$$\kappa(x,y) = - \frac{d}{dt} \frac{W(m_x^{t}, m_y^{t})}{\rho(x,y)}.$$

Computing Ricci curvature on graphs requires solving the optimal transport problem. By definition a given transport plan between measures $m_x^{\varepsilon}$ and $m_x^{\varepsilon}$ gives an upper bound for the transport distance $W(m_x^{\varepsilon}, m_y^{\varepsilon})$ and therefore a lower bound for condensed Ricci curvature. While an optimal transport plan may be intuitive, realizing that it achieves the transport distance directly from the definition is technically challenging as it requires solving a linear programming problem. Fortunately, there is a dual formulation of this optimization problem in terms of $1$-Lipschitz functions that one can use to provide lower bounds to transport distance and consequently upper bounds for condensed Ricci curvature.

\begin{theorem}[Kantorovich Duality Theorem] \label{Thm:Kantorovich}
The $L^1$-Wasserstein or transport distance between probability measures $\mu$ and $\nu$ on a graph $G=(V,E)$ is given by
$$
W(\mu,\nu) = \sup_{f \in Lip(1)} \sum_{z \in V} f(z) (\mu(z)-\nu(z))
$$
where 
$$Lip(1) = \{f : V \to \R \ \big| \ \lvert f(x) - f(y) \rvert \leq \rho(x, y) \ \text{for all} \ x, y \in V \}$$
is the space of $1$-Lipschitz functions on $G$.
\end{theorem}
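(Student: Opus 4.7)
The plan is to prove the two inequalities of the identity separately. Since the measures of interest in this paper (the $m_x^\varepsilon$) have finite support in a locally finite graph, the optimization defining $W(\mu,\nu)$ reduces to a finite linear program and standard LP duality applies without analytic subtleties. The inequality $W(\mu,\nu) \geq \sup_{f \in Lip(1)} \sum_z f(z)(\mu(z)-\nu(z))$ is a direct calculation: for any transport plan $\xi \in {\rchi}(\mu,\nu)$ and any $f \in Lip(1)$, the marginal constraints yield
$$\sum_z f(z)\bigl(\mu(z)-\nu(z)\bigr) = \sum_{x,y}\bigl(f(x)-f(y)\bigr)\xi(x,y) \leq \sum_{x,y}\rho(x,y)\xi(x,y),$$
so taking the supremum over $f$ on the left and the infimum over $\xi$ on the right gives the inequality.

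For the reverse inequality, I would invoke LP duality for the primal problem defining $W(\mu,\nu)$. Its dual is to maximize $\sum_x \phi(x)\mu(x) + \sum_y \psi(y)\nu(y)$ over all pairs of real-valued functions $(\phi,\psi)$ on $V$ satisfying the constraint $\phi(x) + \psi(y) \leq \rho(x,y)$ for every $x,y \in V$. Since the primal is feasible (for instance via the independent coupling $\xi(x,y) = \mu(x)\nu(y)$) and bounded below by zero, strong duality gives equality of the optimal values of the primal and dual.

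The main obstacle is reducing the dual, a supremum over pairs $(\phi,\psi)$, to a supremum over a single $1$-Lipschitz function. This is the standard $c$-transform argument: replace $\psi$ by $\psi'(y) := \inf_x\bigl(\rho(x,y) - \phi(x)\bigr)$, and then replace $\phi$ by $\phi'(x) := \inf_y\bigl(\rho(x,y) - \psi'(y)\bigr)$. Both replacements preserve feasibility and weakly increase the dual objective because $\mu,\nu \geq 0$. An application of the triangle inequality for $\rho$ shows $\phi' \in Lip(1)$, and the constraint at the diagonal pair $(y,y)$ forces $\psi'(y) \leq -\phi'(y)$, so replacing $\psi'$ with $-\phi'$ remains feasible and only increases the objective. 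Setting $f := \phi'$ then produces a $1$-Lipschitz function for which the dual objective equals $\sum_z f(z)(\mu(z)-\nu(z))$, completing the proof.
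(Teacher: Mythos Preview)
Your argument is correct. The easy inequality is handled cleanly, and for the reverse inequality your use of finite LP strong duality together with the $c$-transform reduction from pairs $(\phi,\psi)$ to a single $1$-Lipschitz function $f$ is the standard and rigorous route; each step (feasibility preservation, monotonicity of the objective under the replacements, the Lipschitz bound on $\phi'$ via the triangle inequality, and $\psi'\le -\phi'$ from the diagonal constraint) checks out.

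As for comparison with the paper: there is nothing to compare, because the paper does not prove this statement. Theorem~\ref{Thm:Kantorovich} is quoted as the classical Kantorovich Duality Theorem and used as a black box throughout (e.g.\ in the remark after Theorem~\ref{Thm:CompleteGraphs} and in the proof of Theorem~\ref{Thm:SRGmatching}). Your restriction to finitely supported measures is entirely adequate here, since every probability measure actually appearing in the paper is of the form $m_x^\varepsilon$ and hence supported on $\{x\}\cup\Gamma(x)$, a finite set in a locally finite graph; the primal is then a finite LP and your appeal to strong duality is justified without further analytic care.
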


Due to the Kantorovich Duality Theorem we refer to $1$-Lipschitz functions on a graph $G=(V,E)$ as Kantorovich potential functions. With the Kantorovich Duality Theorem in hand one can compute exact formulas for Ricci curvature by specifying a transport plan and a Kantorovich potential that provide bounds that pinch the condensed Ricci curvature and force equality. Another useful fact for computing condensed Ricci curvature due to \cite{BCLMP} is that the Ricci activeness function $\varepsilon \mapsto \Bbbk_{\varepsilon}$ is piecewise linear with $2$ linear parts on regular graphs of degree $d$ and the $\varepsilon$-Ollivier Ricci curvature differs from the condensed Ricci curvature only by scale for all values $0 \leq \varepsilon \leq \frac{d}{d+1}$. More precisely, on a regular graph $G=(V,E)$ of degree $d$, for any $x,y \in V$
$$
\kappa_{\varepsilon}(x, y) = \varepsilon \ \Bbbk (x,y)\quad \text{for} \ 0 \leq \varepsilon \leq \frac{d}{d+1}.
$$
For simplicity, in our computations we take $\varepsilon = \frac{1}{2}$ and compute the condensed Ricci curvature
$$ \label{Def:HalfEps}
\Bbbk(x, y) = 2\kappa_{\frac{1}{2}}(x, y).
$$

In Riemannian geometry, Ricci curvature is a local quantity so in the context of graphs it seems natural to restrict one's attention to computing Ricci curvature along edges. With our conventions, this restriction allows for further simplification in computing Ricci curvature due to \cite{BhatMuk}. For vertices $x \in V$ let
$$
\Gamma(x) = \{y \in V \ \big| \ \rho(x,y)=1 \}  = \{y \in V \ \big| \ xy \in E  \}
$$
denote the neighbor set of $x$.  Given an edge $xy \in E$ we denote the set of common neighbors or {\it triangle set} of $x$ and $y$ by
$$\nabla_{xy} = \Gamma(x) \cap \Gamma(y).$$
Then we further decompose the $1$-step neighborhoods of $x$ and $y$ into disjoint unions
$$
\Gamma(x) = N_x \cup \nabla_{xy} \cup \{y\} \quad \text{and} \quad \Gamma(y) = N_y \cup \nabla_{xy} \cup \{x\}
$$
where
$$
N_x = \Gamma(x) \setminus (\nabla_{xy} \cup \{y\}) \quad \text{and} \quad N_y = \Gamma(y) \setminus (\nabla_{xy} \cup \{x\}).
$$
We denote the set of common $2$-step neighbors or {\it pentagon set} of $x$ and $y$ by
$$
P_{xy} = \{ z \in V \ \big| \ \rho(x, z) = 2 \ \text{ and } \ \rho(y, z) = 2\}.
$$
As in \cite{BhatMuk}, we refer to the disjoint union of vertices
\begin{equation} \label{Eqn:CoreNbrhd}
\mathcal{N}_{xy} =  \{x\} \cup \{y\} \cup \nabla_{xy} \cup N_x \cup N_y \cup P_{xy}
\end{equation}
as the \emph{core neighborhood} of an edge $xy \in E$. It is immediate that the core neighborhood represents a partition of all vertices with distance less than or equal to $2$ from $x$ and $y$. Moreover, the Reduction Lemma \cite[Lemma 2.3]{BhatMuk} shows that when computing the condensed Ricci curvature of an edge $xy \in E$, it is sufficient to consider only the induced subgraph of $G$ lying within the core neighborhood $\mathcal{N}_{xy}$.

%%%%%%%%%%%%%%%%%%%%%%%%%%%%%%%%%%%%%%%%%%%%%%%%%%%%%%%%%%%%%%%%%%%%%%%%%%%%%%%%%%%%%%%%%%%%%

\section{A Rigidity Theorem for Complete Graphs}\label{Sec:CompleteGraphs}
In this section we establish a rigidity theorem for complete graphs that shows that a complete graph is the only graph with condensed Ricci curvature strictly greater than one. Given a connected finite simple graph $G=(V,E)$ with $n$ vertices $v_1,\dots,v_n$, the adjacency matrix $A$ of $G$ is a symmetric $n \times n$ matrix with entries 
$$
\begin{cases}
a_{ij} =1 &\text{if} \ v_iv_j \in E\\
a_{ij} =0 &\text{otherwise}.
\end{cases}
$$
Let $d_i$ denote the degree of vertex $v_i$ and let $D = \text{diag}(d_1,\dots,d_n)$
be the $n \times n$ diagonal matrix of degrees. The spectral graph Laplacian is then defined
$$
L = I - D^{-\frac{1}{2}}AD^{-\frac{1}{2}}.
$$
With these conventions it follows that the eigenvalues of the spectral graph Laplacian are nondecreasing with
$$
0=\lambda_0 \leq \lambda_1 \leq \cdots \leq \lambda_{n-1}
$$
where the multiplicity of the $0$ eigenvalue is the number of connected components of $G$. Hence, a finite graph is connected if and only if $\lambda_1 > \lambda_0 = 0$. Similar to the spectrum of the metric Laplacian on Riemannian manifolds, the spectrum of the spectral Laplacian on graphs encodes geometric and structural properties of a graph. 

In the context of Riemannian geometry the Lichnerowicz theorem \cite{Lich} is a widely celebrated theorem that provides a lower bound for the first nonzero eigenvalue of the metric Laplacian when the Ricci curvature has a strict positive lower bound. More precisely, if $(M^n,g)$ is an $n$-dimensional compact Riemannian manifold with
$$Ric_g \geq (n-1) k g$$
for some positive constant $k>0$, then the first positive eigenvalue of the metric Laplacian $\lambda_1 \geq nk$. The graph analog of the Lichnerowicz theorem similarly shows that a positive lower bound on condensed Ricci curvature serves as a lower bound on the first nonzero eigenvalue of the spectral graph Laplacian.

\begin{theorem} \label{Thm:GraphLich} \textup{\cite[Theorem 4.2]{LLY1}} 
Suppose $G=(V,E)$ is a connected finite simple graph with condensed Ricci curvature
$$\Bbbk(x,y) \geq \kappa_0 > 0$$ 
for some positive constant $\kappa_0$ and all edges $xy \in E$. Then the first nonzero eigenvalue of the spectral graph Laplacian $\lambda_1 \geq \kappa_0$.
\end{theorem}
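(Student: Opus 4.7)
The plan is to extract the eigenvalue lower bound from a test-function inequality obtained by pairing Kantorovich duality (Theorem \ref{Thm:Kantorovich}) with the Ricci lower bound. Since the spectral Laplacian $L = I - D^{-1/2}AD^{-1/2}$ is similar via conjugation by $D^{1/2}$ to the random walk Laplacian $\tilde L := I - D^{-1}A$, the two operators share the same spectrum, so I would work throughout with an eigenfunction $f$ of $\tilde L$ satisfying $\tilde L f = \lambda_1 f$. The natural averaging operator associated with the probability measures in \eqref{Eqn:EpsilonMeasure} is
\[
P_\varepsilon f(x) := \sum_{z \in V} m_x^\varepsilon(z)\, f(z) = (1-\varepsilon) f(x) + \frac{\varepsilon}{\deg(x)} \sum_{y \in \Gamma(x)} f(y),
\]
and a direct calculation shows $P_\varepsilon f = f - \varepsilon \tilde L f$; substituting the eigenvalue equation collapses this to the clean identity $P_\varepsilon f = (1 - \varepsilon \lambda_1) f$.

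Because $\lambda_1 > 0$, the eigenfunction $f$ is non-constant, so its edge-Lipschitz constant $L_f := \max_{uv \in E} |f(u) - f(v)|$ is strictly positive, and by a standard path argument $L_f$ is also the Lipschitz constant of $f$ with respect to the graph distance $\rho$. I would then choose an edge $xy \in E$ realizing this maximum, with $f(x) - f(y) = L_f$. Applying Kantorovich duality to the $1$-Lipschitz function $f / L_f$ yields
\[
(1 - \varepsilon \lambda_1)\, L_f \;=\; P_\varepsilon f(x) - P_\varepsilon f(y) \;\leq\; L_f \cdot W(m_x^\varepsilon, m_y^\varepsilon).
\]
On the other side, unpacking the definition $\Bbbk(x,y) = \lim_{\varepsilon \to 0^+} \kappa_\varepsilon(x,y) / \varepsilon$ gives the first-order expansion $W(m_x^\varepsilon, m_y^\varepsilon) = \rho(x,y)\bigl(1 - \varepsilon \Bbbk(x,y)\bigr) + o(\varepsilon)$, and the edge condition $\rho(x,y) = 1$ together with the hypothesis $\Bbbk(x,y) \geq \kappa_0$ gives $W(m_x^\varepsilon, m_y^\varepsilon) \leq 1 - \varepsilon \kappa_0 + o(\varepsilon)$.

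Chaining these two bounds produces $(1 - \varepsilon \lambda_1) L_f \leq L_f(1 - \varepsilon \kappa_0) + L_f \cdot o(\varepsilon)$; cancelling $L_f > 0$, dividing by $\varepsilon > 0$, and sending $\varepsilon \to 0^+$ leaves $\lambda_1 \geq \kappa_0$, as desired. The main obstacle is the passage from the limit-defined curvature $\Bbbk$ to a controlled finite-$\varepsilon$ estimate on $W(m_x^\varepsilon, m_y^\varepsilon)$; once the first-order expansion is extracted directly from the definition, the $o(\varepsilon)$ remainder disappears in the final limit and the argument closes cleanly. A minor bookkeeping issue is the spectral equivalence of $L$ and $\tilde L$, which is immediate from their similarity via $D^{1/2}$ and lets us choose the eigenfunction in the form best suited to the measures $m_x^\varepsilon$.
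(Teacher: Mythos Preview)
The paper does not provide its own proof of Theorem~\ref{Thm:GraphLich}; it is quoted from \cite[Theorem~4.2]{LLY1} and used as a black box in the proof of Theorem~\ref{Thm:CompleteGraphs}. There is therefore no in-paper argument to compare your proposal against.

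That said, your argument is correct and is essentially the standard proof (and the one given in \cite{LLY1}): pass to the random-walk Laplacian $\tilde L$ via similarity, observe that the averaging operator $P_\varepsilon$ acts on a $\lambda_1$-eigenfunction as multiplication by $1-\varepsilon\lambda_1$, then test Kantorovich duality with the normalized eigenfunction on an edge realizing its Lipschitz constant. Each step checks out: the identity $P_\varepsilon = I - \varepsilon\tilde L$ is immediate from \eqref{Eqn:EpsilonMeasure}, the edge-Lipschitz constant equals the $\rho$-Lipschitz constant by the triangle inequality along shortest paths, and the expansion $W(m_x^\varepsilon,m_y^\varepsilon)=1-\varepsilon\Bbbk(x,y)+o(\varepsilon)$ follows directly from the definitions of $\kappa_\varepsilon$ and $\Bbbk$. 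The only cosmetic point is that for $\varepsilon$ small enough one has $1-\varepsilon\lambda_1>0$, so no sign issue arises when reading off the inequality; since you pass to the limit $\varepsilon\to 0^+$ anyway, this is harmless.
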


In order to establish a rigidity theorem for complete graphs we first record some well known results on the first nonzero eigenvalue of the spectral graph Laplacian.

\begin{theorem} \textup{\label{Thm:Lambda1} \cite{Chung}}
Let $G=(V,E)$ be a simple graph on $n$ vertices. Then 
$$\lambda_1 \leq \frac{n}{n-1}$$ 
with equality if and only if $G$ is complete. Moreover, complete graphs are the only graphs with $\lambda_1 > 1$.
\end{theorem}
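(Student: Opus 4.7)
The plan is to combine the variational characterization of $\lambda_1$ with a direct spectral computation for $K_n$. On a connected graph the Courant--Fischer principle, applied after the substitution $f = D^{1/2} g$, gives
\begin{equation*}
\lambda_1 = \min\left\{\frac{\sum_{xy \in E}\bigl(g(x)-g(y)\bigr)^2}{\sum_{x \in V} d_x\, g(x)^2} \ : \ g \not\equiv 0, \ \sum_{x \in V} d_x g(x) = 0\right\},
\end{equation*}
where the constraint reflects orthogonality to $\ker L = \operatorname{span}(D^{1/2}\mathbf 1)$.

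First I would dispose of the complete graph by an explicit computation. For $G = K_n$ every degree is $n-1$, so $L = \frac{1}{n-1}(nI - J)$, where $J$ is the all-ones matrix. Since $J\mathbf 1 = n\mathbf 1$ and $Jv = 0$ for every $v$ orthogonal to $\mathbf 1$, the spectrum of $L$ consists of the simple eigenvalue $0$ together with the eigenvalue $n/(n-1)$ of multiplicity $n-1$. In particular $\lambda_1(K_n) = n/(n-1)$, which handles the equality case.

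Next, suppose $G$ is connected but not complete, and produce a test function showing $\lambda_1 \leq 1$. Pick any pair of non-adjacent vertices $a, b \in V$ and set
\begin{equation*}
g(a) = \frac{1}{d_a}, \qquad g(b) = -\frac{1}{d_b}, \qquad g(v) = 0 \text{ for } v \notin \{a, b\}.
\end{equation*}
A direct check gives $\sum_x d_x g(x) = 0$, so $g$ is admissible. Because $a \not\sim b$, no edge has both endpoints in $\{a, b\}$, so the numerator of the Rayleigh quotient reduces to $d_a \cdot (1/d_a)^2 + d_b \cdot (1/d_b)^2 = 1/d_a + 1/d_b$, which coincides exactly with the denominator. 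Thus the Rayleigh quotient equals $1$, and $\lambda_1 \leq 1 < n/(n-1)$. This single inequality simultaneously yields the upper bound $\lambda_1 \leq n/(n-1)$ in all cases, the fact that equality forces $G = K_n$, and the final assertion that $\lambda_1 > 1$ can only occur for complete graphs. The disconnected case is handled trivially, since $\lambda_1 = 0$ whenever $G$ has more than one component.

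The only delicate point is the engineering of the test function. Weighting the two chosen vertices by the reciprocals of their degrees is what makes the orthogonality constraint hold automatically \emph{and} forces the numerator and denominator of the Rayleigh quotient to coincide exactly; more naive choices (for example $g(a) = 1, g(b) = -1$, or an indicator function) fail to pinch out the clean bound $\lambda_1 \leq 1$ required for the rigidity statement. Beyond this, the argument is entirely mechanical, and no deeper spectral input (such as interlacing or Perron--Frobenius) is needed.
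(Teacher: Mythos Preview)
The paper does not supply its own proof of this statement; it is quoted verbatim as a known result from Chung's monograph and then invoked as a black box in the proof of Theorem~\ref{Thm:CompleteGraphs}. So there is no in-paper argument to compare against.

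That said, your proof is correct and is essentially the standard argument. The Rayleigh-quotient formulation of $\lambda_1$ after the substitution $f = D^{1/2}g$ is the right starting point, the explicit diagonalization of $L(K_n) = \tfrac{1}{n-1}(nI - J)$ is clean, and the test function supported on two non-adjacent vertices with weights $1/d_a$ and $-1/d_b$ is exactly the right construction: the orthogonality constraint $\sum_x d_x g(x) = 0$ is satisfied, and because $a \not\sim b$ every edge touches at most one of the two support vertices, forcing numerator and denominator to coincide and giving Rayleigh quotient equal to $1$. Your closing remarks about why the reciprocal-degree weighting is essential are also on point. The only cosmetic comment is that the upper bound $\lambda_1 \le n/(n-1)$ for \emph{all} graphs follows here only by splitting into the cases ``complete'' and ``not complete''; Chung obtains it uniformly from the trace identity $\sum_i \lambda_i = n$ and the inequality $\lambda_1 \le \tfrac{1}{n-1}\sum_{i\ge 1}\lambda_i$, which is perhaps more direct for that single inequality but does not by itself yield the rigidity statement. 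Your route has the virtue of delivering all three assertions at once.
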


We now prove the following rigidity theorem through direct computation and the aid of Theorems \ref{Thm:GraphLich} and \ref{Thm:Lambda1}.

\begin{theorem} \label{Thm:CompleteGraphs}
A connected finite simple graph $G = (V, E)$ is complete if and only if the condensed Ricci curvature $\Bbbk(x,y) > 1$ for all edges $xy \in E$.
\end{theorem}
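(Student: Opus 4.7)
The plan is to prove the two implications separately. The reverse direction (condensed Ricci curvature strictly greater than $1$ on every edge forces completeness) will follow almost immediately from the graph Lichnerowicz theorem (Theorem \ref{Thm:GraphLich}) combined with Chung's upper bound on $\lambda_1$ (Theorem \ref{Thm:Lambda1}). The forward direction requires a direct computation of $\Bbbk(x,y)$ on the complete graph $K_n$ via the Kantorovich duality machinery reviewed in Section \ref{Sec:TransportDist}.

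For the forward direction, I would fix an edge $xy$ in $K_n$ and work at the convenient value $\varepsilon = \tfrac{1}{2}$, so that $\Bbbk(x,y) = 2\kappa_{1/2}(x,y)$. Because every vertex of $K_n$ different from $x$ is a neighbor of $x$, and likewise for $y$, the measures $m_x^{1/2}$ and $m_y^{1/2}$ agree at every vertex outside $\{x,y\}$. A short arithmetic check gives
\[
m_x^{1/2}(x) - m_y^{1/2}(x) = \frac{n-2}{2(n-1)}, \qquad m_x^{1/2}(y) - m_y^{1/2}(y) = -\frac{n-2}{2(n-1)}.
\]
The transport plan that holds all common mass in place and ships the excess $\tfrac{n-2}{2(n-1)}$ from $x$ to $y$ along the edge $xy$ yields the upper bound $W(m_x^{1/2}, m_y^{1/2}) \leq \tfrac{n-2}{2(n-1)}$. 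For a matching lower bound I would invoke Theorem \ref{Thm:Kantorovich} using the potential $f(x) = 1$, $f(y) = 0$, and $f \equiv \tfrac{1}{2}$ on $V \setminus \{x,y\}$; this is manifestly $1$-Lipschitz on $K_n$ (in fact on any graph of diameter $\leq 2$ containing $xy$), and the dual sum collapses to $\tfrac{n-2}{2(n-1)}$ since the measures agree off $\{x,y\}$. Therefore $W(m_x^{1/2}, m_y^{1/2}) = \tfrac{n-2}{2(n-1)}$ and
\[
\Bbbk(x,y) = 2\kappa_{1/2}(x,y) = 2\!\left(1 - \frac{n-2}{2(n-1)}\right) = \frac{n}{n-1} > 1
\]
for every $n \geq 2$, as claimed.

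For the reverse direction, assume $\Bbbk(x,y) > 1$ for every edge $xy \in E$. Since $E$ is finite, the quantity $\kappa_0 := \min_{xy \in E}\Bbbk(x,y)$ satisfies $\kappa_0 > 1$. The graph Lichnerowicz theorem (Theorem \ref{Thm:GraphLich}) then yields $\lambda_1 \geq \kappa_0 > 1$, and Chung's bound (Theorem \ref{Thm:Lambda1}) forces $G$ to be a complete graph, completing the proof.

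The main technical step is the forward computation, and the only genuinely delicate point there is producing the optimal Kantorovich potential to match the obvious transport plan; once that witness is in hand, the rigidity half is essentially free because the Lichnerowicz-type bound and Chung's theorem do all of the structural work.
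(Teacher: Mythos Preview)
Your proof is correct and follows essentially the same approach as the paper: for the forward direction you use the $\varepsilon = \tfrac{1}{2}$ measures and the same transport plan (ship the excess mass directly from $x$ to $y$), and for the reverse direction you invoke the graph Lichnerowicz theorem together with Chung's eigenvalue bound, exactly as the paper does. The only minor difference is that you also supply a Kantorovich potential to pin down the exact value $\Bbbk(x,y) = \tfrac{n}{n-1}$, whereas the paper's proof of the theorem uses only the transport-plan lower bound (which already gives $\Bbbk > 1$) and relegates the matching dual witness to a subsequent remark, using the slightly different potential $f = \mathbf{1}_{\{x\}}$.
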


\begin{proof}
Let $G=(V,E)$ be the complete graph on $n$ vertices and let $xy \in E$. Then the triangle set $\nabla_{xy} = V \setminus \{x, y\}$
since $G$ is complete. Let $m_x$ and $m_y$ denote $\varepsilon=\frac{1}{2}$ probability measures based at $x$ and $y$, respectively, defined as in \eqref{Eqn:EpsilonMeasure}. Consider the transport plan $\xi:V \times V \to [0,1]$ between $m_x$ and $m_y$ given by 
$$\xi(v, w) = 
\begin{cases}
\frac{1}{2} - \frac{1}{2(n-1)} & v = x, w = y \\
\frac{1}{2(n-1)} & v = w \\
0 & \text{otherwise},
\end{cases}$$
which transports excess mass directly from $x$ to $y$ and leaves the remainder of the distribution fixed. Then by Definition \ref{Def:TransportMetric} the transport distance
$$W(m_x ,m_y) \leq \sum_{v\in V}\sum_{w\in V} \rho(v,w)\xi(v,w) = \xi(x,y) = \frac{1}{2} - \frac{1}{2(n-1)}$$
and therefore the condensed Ricci curvature
\begin{equation} \label{Eqn:CompleteRicciLB}
\Bbbk(x, y) = 2\kappa_{\frac{1}{2}}(x, y) \geq 2\left(1 - \left(\frac{1}{2} - \frac{1}{2(n - 1)}\right)\right) = 1 + \frac{1}{n-1} > 1.
\end{equation}

Conversely, let $G=(V,E)$ be a connected finite simple graph with condensed Ricci curvature $\Bbbk(x ,y) > 1$ for all edges $xy \in E$. Then by the graph Lichnerowicz Theorem \ref{Thm:GraphLich} of \cite{LLY1}, it follows $\lambda_1 > 1$ and therefore $G$ is complete by Theorem \ref{Thm:Lambda1}. \end{proof}

\begin{remark}\textup{
Using the $1$-Lipschitz function $f:V\to{\R}$ defined by
$$f(v) = 
\begin{cases}
1 & v = x\\
0 & \text{otherwise},
\end{cases}$$
it follows from the Kantorovich Duality Theorem \ref{Thm:Kantorovich} that the transport distance
$$W(m_x ,m_y) \geq \sum_{v \in V}f(v)(m_x(v) - m_y(v)) = m_x(x) - m_y(x) = \frac{1}{2} - \frac{1}{2(n-1)}$$
and therefore the condensed Ricci curvature
\begin{equation} \label{Eqn:CompleteRicciUB}
\Bbbk(x, y) = 2\kappa_{\frac{1}{2}}(x, y) \leq 2\left(1 - \left(\frac{1}{2} - \frac{1}{2(n - 1)}\right)\right) = 1 + \frac{1}{n-1}.
\end{equation} 
Hence, from \eqref{Eqn:CompleteRicciLB} and \eqref{Eqn:CompleteRicciUB} we see that the condensed Ricci of the complete graph on $n$ vertices is
$$\Bbbk(x, y) = 1+\frac{1}{n-1} = \frac{n}{n-1}$$
for all edges $xy \in E$ as stated in \cite[Ex. 1]{LLY1}.
}\end{remark}

%%%%%%%%%%%%%%%%%%%%%%%%%%%%%%%%%%%%%%%%%%%%%%%%%%%%%%%%%%%%%%%%%%%%%%%%%%%%%%%%%%%%%%%%%%%%%

\section{Strongly Regular Graphs}\label{Sec:SRGraphs}
In this section we derive an explicit formula for the condensed Ricci curvature of strongly regular graphs $G=(V,E)$ in terms of the graph parameters and the size of a maximum matching in the core neighborhood. Recall that a simple, undirected, finite graph $G=(V,E)$ is said to be a strongly regular graph with parameters $(n, d, \alpha, \beta)$ if $G$ has $n$ vertices of constant degree $d$ where any two adjacent vertices share $\alpha \geq 0$ common neighbors and any two nonadjacent vertices share $\beta \geq 1$ common neighbors. Given an edge $xy \in E$, we consider the core neighborhood decomposition \eqref{Eqn:CoreNbrhd} given by
$$\mathcal{N}_{xy} =  \{x\} \cup \{y\} \cup \nabla_{xy} \cup N_x \cup N_y \cup P_{xy}$$
that was introduced in \cite{BhatMuk} and where the neighbor sets
$$N_x = \Gamma(x) \setminus (\nabla_{xy} \cup \{y\}) \quad \text{and} \quad N_y = \Gamma(x) \setminus (\nabla_{xy} \cup \{y\}).$$
Since strongly regular graphs have diameter $2$, it follows that $V=\mathcal{N}_{xy}$. Hence, for strongly regular graphs the induced subgraph lying within the core neighborhood of any edge accounts for the entire graph $G$. Moreover, from strong regularity we see $|\nabla_{xy}| = \alpha$ so that 
$$|N_x| = |N_y| = d - \alpha -1 \quad \text{and} \quad |P_{xy}| = n - 2d - \alpha.$$

Our strategy in the derivation of an explicit formula for the condensed Ricci curvature of strongly regular graphs reformulates the problem of finding candidates for an optimal transport plan and an optimal Kantorovich potential into a maximum matching problem.

\begin{definition} \label{Def:Matching}
A {\it matching} on a finite undirected simple graph $G=(V,E)$ is a subset of edges $\mathcal{M} \subseteq E$ such that no vertex in $V$ is incident to more than one edge in $\mathcal{M}$. The size of a matching $\mathcal{M}$ on $G$ is the total number of edges in $\mathcal{M}$. A matching $\mathcal{M}$ on $G$ is said to be a {\it maximum matching} if it contains the largest possible number of edges. A maximum matching between disjoint subsets of vertices $S,T \subset V$ is said to be a {\it perfect matching} if every vertex in $S$ and $T$ is incident to exactly one edge of the matching. 
\end{definition}

Let $G = (V, E)$ be strongly regular graph with edge $xy \in E$. Given a maximum matching $\mathcal{M}$ between the neighbor sets $N_x$ and $N_y$, we denote the vertices that are matched in $N_x$ and $N_y$ by $M_x$ and $M_y$, respectively. If $\mathcal{M}$ is not a perfect matching we denote the remaining unmatched vertices in $N_x$ and $N_y$ by $U_x = N_x \setminus M_x$ and $U_y = N_y \setminus M_y$, respectively. With this decomposition of $N_x$ and $N_y$ into matched and unmatched vertices it is fairly straightforward to develop a candidate for an optimal transport plan. However, developing a candidate for an optimal Kantorovich potential requires a deeper understanding of the incidence relations between edges in a given maximum matching and the remaining edges between $N_x$ and $N_y$.

\begin{definition} \label{Def:Paths}
Let $\mathcal{M}$ be a matching on an undirected simple graph $G=(V,E)$. An {\it alternating path} $P$ in $G$ is a path $P=v_0v_1 \cdots v_k$ such that the initial vertex $v_0$ is unmatched and $v_i v_{i + 1} \in \mathcal{M}$ if and only if $v_{i - 1} v_{i} \not \in \mathcal{M}$ for all $0 < i < k$. An {\it augmenting path} $P$ in $G$ is an alternating path $P=v_0v_1 \cdots v_k$ such that the terminal vertex $v_k$ is also unmatched.
\end{definition}
                
We refer to a single vertex as a trivial path. Hence, with this convention any unmatched vertex is by itself a trivial augmenting path. One of the main tools in our construction of a candidate for an optimal Kantorovich potential is Berge's Lemma \cite{Berge}.

\begin{lemma}[Berge's Lemma] \label{Lem:Berge}
A matching $\mathcal{M}$ on an undirected simple graph $G=(V,E)$ is a maximum matching if and only if $G$ contains no nontrivial augmenting paths.
\end{lemma}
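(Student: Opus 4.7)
The plan is to establish both implications using the symmetric difference of matchings as the central tool. The forward direction (that $\mathcal{M}$ maximum implies the absence of nontrivial augmenting paths) will be proved by contraposition, while the backward direction requires a more delicate structural analysis of a hypothetical larger matching.

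For the forward direction, I would assume there exists a nontrivial augmenting path $P = v_0 v_1 \cdots v_k$ with respect to $\mathcal{M}$ and construct a strictly larger matching $\mathcal{M}' = \mathcal{M} \triangle E(P)$. By hypothesis, $v_0$ is unmatched and the edges of $P$ alternate in membership in $\mathcal{M}$ starting with $v_0 v_1 \notin \mathcal{M}$; consequently $P$ contains exactly one more non-matching edge than matching edge. One verifies that $\mathcal{M}'$ is still a valid matching: each internal vertex of $P$ simply exchanges its $\mathcal{M}$-partner for the neighbor on the other side of $P$, while the endpoints $v_0$ and $v_k$ were previously unmatched. Thus $|\mathcal{M}'| = |\mathcal{M}| + 1$, contradicting maximality.

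For the converse, I would argue by contradiction: assume $\mathcal{M}$ has no nontrivial augmenting path but is not maximum, so there exists a matching $\mathcal{N}$ with $|\mathcal{N}| > |\mathcal{M}|$. Consider the spanning subgraph $H = (V, \mathcal{M} \triangle \mathcal{N})$. Since each vertex is incident to at most one edge of $\mathcal{M}$ and at most one edge of $\mathcal{N}$, every vertex of $H$ has degree at most $2$, so $H$ decomposes into vertex-disjoint simple paths and simple cycles. Along each such component the edges strictly alternate between $\mathcal{M}$ and $\mathcal{N}$, forcing every cycle to have even length and to contribute equally to both matchings. The strict inequality $|\mathcal{N}| > |\mathcal{M}|$ therefore forces some path component to contain strictly more $\mathcal{N}$-edges than $\mathcal{M}$-edges; such a path must begin and end with an $\mathcal{N}$-edge, and its endpoints are unmatched in $\mathcal{M}$. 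This produces a nontrivial $\mathcal{M}$-augmenting path, contradicting our hypothesis.

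The main obstacle is the structural step in the converse, namely the claim that $H$ decomposes cleanly into alternating paths and even cycles. One must treat the edge overlap carefully, since edges of $\mathcal{M} \cap \mathcal{N}$ are absent from $H$ and should not be miscounted, and confirm that the alternation along any path or cycle is forced by the matching condition applied separately to $\mathcal{M}$ and $\mathcal{N}$. Once this decomposition is in place, the counting argument comparing $|\mathcal{N}|$ and $|\mathcal{M}|$ is essentially immediate.
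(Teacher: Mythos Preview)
Your argument is the standard, correct proof of Berge's Lemma via the symmetric difference $\mathcal{M}\triangle\mathcal{N}$, and both directions are handled cleanly. Note, however, that the paper does not supply its own proof of this lemma: it is stated with attribution to Berge \cite{Berge} and used as a black box in the proofs of Lemmas~\ref{Lem:Lip} and~\ref{Lem:CountingPaths}. So there is no in-paper argument to compare against; your proposal simply fills in the classical proof that the paper omits.
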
                
                
Given a matching $\mathcal{M}$ on a graph $G=(V,E)$ and a subset of vertices $U \subseteq V$, we denote the subset of matched vertices in $U$ by $\mathcal{M}(U)$.  For a subset of vertices $S \subseteq V$ we denote the subset of alternating paths with initial vertex in $S$ by $A_S$. For subsets of vertices $S,T \subseteq V$ we denote the set of vertices in $T$ that lie along an alternating path initiated in $S$ by $A_S(T)$. We use the contrapositive of the following lemma to show that certain vertices in the neighbor sets $N_x$ and $N_y$ have no edges between them. Ultimately, this allows us to ensure our candidate for an optimal Kantorovich potential is $1$-Lipschitz.
  
\begin{lemma} \label{Lem:Lip}
Let $H = (V, E)$ be a finite, undirected, bipartite graph with parts $S, T$. Let $\mathcal{M}$ be a maximum matching on $H$ and suppose $v \in A_{S}(S)$ and $w \in T$. If $vw \in E$, then $w \in A_{S}(T)$.
\end{lemma}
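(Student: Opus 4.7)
The plan is to unpack the hypothesis $v\in A_S(S)$ into an explicit alternating path $P=v_0v_1\cdots v_\ell$ with $v_0\in S$ unmatched and some $v_i=v$, then exploit the bipartite structure of $H$ to control parity. Since $H$ has parts $S$ and $T$, consecutive vertices of $P$ sit on opposite sides, so the even-indexed vertices lie in $S$ and the odd-indexed vertices lie in $T$. In particular $v=v_{2k}$ for some $k\geq 0$, and any $w\in T$ that already lies on $P$ must equal some $v_j$ with $j$ odd.

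I would then split on whether $w\in V(P)$. If $w=v_j$ already lies on $P$, then the truncated prefix $v_0v_1\cdots v_j$ is itself an alternating path in the sense of Definition \ref{Def:Paths} and immediately witnesses $w\in A_S(T)$. Otherwise, the strategy is to append the edge $vw$ to the prefix $v_0v_1\cdots v_{2k}$ and show that the extended walk $P'=v_0v_1\cdots v_{2k}w$ is an alternating path. Since $w\notin V(P)$ the walk has no repeated vertex, so $P'$ is a genuine path; its initial vertex $v_0$ is unmatched by construction; and the alternation along $v_0\cdots v_{2k}$ is inherited from $P$. The only new condition to verify is $v_{2k}w\notin\mathcal{M}$, so that the alternation pattern extends one more step.

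To verify $v_{2k}w\notin\mathcal{M}$ I would split on $k$. If $k=0$, then $v=v_0$ is unmatched, and so no edge incident to $v$ lies in $\mathcal{M}$. If $k\geq 1$, the alternation along $P$ forces $v_{2k-1}v_{2k}\in\mathcal{M}$, so the unique matching partner of $v$ is $v_{2k-1}$; since $w\notin V(P)$ forces $w\neq v_{2k-1}$, no other incident edge, and in particular $vw$, can lie in $\mathcal{M}$. I expect no serious obstacle: the argument is essentially bookkeeping on the parity of indices in an alternating path, and the only delicate point is handling the trivial case $k=0$ separately. Notably, this extension does not invoke Berge's Lemma or the maximality of $\mathcal{M}$; bipartiteness and uniqueness of matching partners are enough. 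The maximality hypothesis on $\mathcal{M}$ will instead enter later, via the contrapositive of this lemma, to exclude edges between suitable subsets of $N_x$ and $N_y$ in the curvature computations.
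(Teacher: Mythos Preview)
Your argument is correct and follows the same core idea as the paper's proof: take an alternating path reaching $v$ and extend it by the edge $vw$. Your version is in fact a bit tidier on two points. First, you explicitly treat the case $w\in V(P)$, which the paper glosses over; if $w$ already sits on $P$ then appending $vw$ would not yield a path, and your observation that the prefix $v_0\cdots v_j$ already witnesses $w\in A_S(T)$ is the right fix. Second, you correctly observe that maximality of $\mathcal{M}$ and Berge's Lemma are not needed here. The paper invokes Berge in the case where $v$ is unmatched, arguing that $w$ must be matched lest $vw$ be an augmenting path; but this is superfluous, since the length-one path $vw$ is alternating (with unmatched initial vertex $v$) regardless of whether $w$ is matched, so $w\in A_S(T)$ either way. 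Your parity bookkeeping with $v=v_{2k}$ and the split $k=0$ versus $k\geq 1$ is exactly the right way to organize the verification that $vw\notin\mathcal{M}$.
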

            
\begin{proof}
First note that since $H$ is bipartite, it follows that an alternating path initiated in $S$ can only terminate in an unmatched vertex in $T$. Therefore, if $v \in A_{S}(S)$ is an unmatched vertex, then $v$ must be the initial vertex of an alternating path. But then $w$ must be a matched vertex so that $w \in A_{S}(T)$ since otherwise $vw \in E$ is an augmenting path, contradicting Berge's Lemma as $\mathcal{M}$ is a maximum matching. Now assume that $v$ is a matched vertex and that $P = v_0v_1 \dots v_iv$ is an alternating path initiated at $v_0 \neq v \in S$ that terminates at $v \in S$. Then since $v_0 \in S$ is an unmatched vertex and $H$ is bipartite, it follows that $v_i v \in \mathcal{M}$ since $P$ is an alternating path. But then $vw \not\in \mathcal{M}$ since otherwise $v$ is incident to more than one edge in $\mathcal{M}$. But then the concatenation $P' =  v_0v_1 \dots v_ivw$ is an alternating path so that $w \in A_{S}(T)$.
\end{proof}

We use this final lemma to compute a lower bound for condensed Ricci curvature from our Kantorovich potential in terms of the number of vertices in $N_x$ and $N_y$ along alternating paths initiated in $N_y$.
            
\begin{lemma} \label{Lem:CountingPaths}
Let $H = (V, E)$ be a finite, undirected, bipartite graph with parts $S, T$. Suppose $\mathcal{M}$ is a maximum matching of size $m$ on $H$. Then 
$$| A_{S}(S) | = |  A_{S}(T) | + |S| - m.$$
\end{lemma}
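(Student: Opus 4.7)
The plan is to prove the identity by partitioning $A_{S}(S)$ into unmatched and matched vertices, counting the unmatched part by elementary means, and exhibiting a bijection between the matched part and $A_{S}(T)$ induced by $\mathcal{M}$ itself. Two structural observations would guide everything. Because $H$ is bipartite, any alternating path $v_{0} v_{1} \cdots v_{k}$ initiated at an unmatched $v_{0} \in S$ has $v_{i} \in S$ for even $i$ and $v_{i} \in T$ for odd $i$, and the alternation condition forces $v_{i} v_{i+1} \in \mathcal{M}$ if and only if $i$ is odd; in particular, every $v_{i}$ with $i > 0$ is saturated by $\mathcal{M}$. Furthermore, by Berge's Lemma the maximality of $\mathcal{M}$ forbids any alternating path from terminating at an unmatched vertex, so every vertex of $A_{S}(T)$ must be matched.

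Next I would partition $A_{S}(S) = U \sqcup M_{S}$, where $U$ is the set of unmatched vertices of $S$ (each a trivial single-vertex alternating path, hence automatically in $A_{S}(S)$) and $M_{S} := A_{S}(S) \setminus U$ consists of matched vertices of $S$ lying on some nontrivial alternating path initiated in $S$. Since $|\mathcal{M}| = m$ and $H$ is bipartite, exactly $m$ vertices of $S$ are saturated, giving $|U| = |S| - m$. It thus suffices to establish $|M_{S}| = |A_{S}(T)|$.

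The crux is a bijection $\phi : M_{S} \to A_{S}(T)$ sending each $v \in M_{S}$ to its unique $\mathcal{M}$-partner. For well-definedness, truncate any witnessing alternating path for $v$ to end at $v$, say $v_{0} \cdots v_{j} = v$; since $v$ is matched we have $j \neq 0$, so $j$ is even and positive and $v_{j-1} v \in \mathcal{M}$, placing the partner $v_{j-1}$ in $A_{S}(T)$. Injectivity is immediate from the defining property of a matching. For surjectivity, given $w \in A_{S}(T)$ with witnessing path $v_{0} \cdots v_{k} = w$ (so $k$ is odd), the terminal edge $v_{k-1} v_{k}$ is not in $\mathcal{M}$ while $w$ is matched by the first paragraph, so the partner $v' \in S$ of $w$ differs from $v_{k-1}$ and appending $v'$ yields an alternating path ending at $v'$, whence $v' \in M_{S}$ and $\phi(v') = w$. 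Summing the two parts gives $|A_{S}(S)| = (|S| - m) + |A_{S}(T)|$. The main delicacy I expect is the surjectivity step, specifically verifying that the extension $v_{0} \cdots v_{k} v'$ is actually a \emph{simple} path: any earlier appearance of $v'$ at an even index $i$ would either place $v'$ at the unmatched $v_{0}$, contradicting that $v'$ is matched to $w$, or force $v'$'s $\mathcal{M}$-partner to be $v_{i-1}$, contradicting uniqueness of matching partners since $v_{i-1} \neq v_{k}$.
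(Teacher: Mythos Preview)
Your proof is correct and follows essentially the same route as the paper: both arguments partition $A_{S}(S)$ into the unmatched vertices of $S$ (of cardinality $|S|-m$) and the matched vertices $A_{S}(S)\cap\mathcal{M}(S)$, and both establish a bijection between the latter set and $A_{S}(T)$ by sending a vertex to its unique $\mathcal{M}$-partner, using Berge's Lemma to ensure every vertex of $A_{S}(T)$ is matched. The only cosmetic difference is that the paper presents the bijection starting from $A_{S}(T)$ while you start from $M_{S}$; your additional remark verifying that the extended path in the surjectivity step remains simple is a point the paper leaves implicit.
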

                    
\begin{proof}
Let $v \in A_{S}(T)$ and suppose $P=v_0v_1 \cdots v_iv$ is an alternating path initiated at $v_0 \in S$ that terminates at $v \in T$. Then since $v_0 \in S$ is an unmatched vertex and $H$ is bipartite, it follows that $v_iv \not\in \mathcal{M}$ since $P$ is an alternating path. If $v$ is an unmatched vertex, then $P$ is an augmenting path contrary to Berge's Lemma as $\mathcal{M}$ is a maximum matching. Hence, $v$ must be a matched vertex and therefore since matched vertices are incident to a single edge in the matching, it follows that there exits a unique vertex $w \in S$ such that $vw \in \mathcal{M}$. But then the concatenation $P' = v_0v_1\cdots vw$ is an alternating path initiated in $S$ that terminates at the matched vertex $w \in \mathcal{M}(S)$. Hence, for each $v \in A_{S}(T)$ there is a unique $w \in  A_{S}(S) \cap \mathcal{M}(S)$ with $vw \in \mathcal{M}$. 

On the other hand, given $w \in  A_{S}(S) \cap \mathcal{M}(S)$ suppose $P=w_0w_1 \cdots w_iw$ is an alternating path initiated in $S$ that terminates at $w \in S$. Then since $w$ is a matched vertex by assumption, it is clear that $P$ is a nontrivial path and that $w_i \in T$ is the unique matched vertex with $w_iw \in \mathcal{M}$. But then $P'=w_0w_1 \cdots w_i$ is an alternating path initiated in $S$ that terminates at $w_i \in T$ so that $w_i \in A_{S}(T)$. Hence, for each $w \in  A_{S}(S) \cap \mathcal{M}(S)$ there is a unique $v \in A_{S}(T)$ with $vw \in \mathcal{M}$.

Thus, since $A_S(T) \subseteq \mathcal{M}(T)$, we have a bijection between the subsets $A_{S}(T)$ and $A_{S}(S) \cap \mathcal{M}(S)$ and therefore $|A_{S}(T)| = | A_{S}(S) \cap \mathcal{M}(S)|$. Noting $S \setminus \mathcal{M}(S) \subseteq A_S(S)$ since under our conventions all unmatched vertices in $S$ are trivial alternating paths initiated in $S$, it follows  
\begin{align*}
|A_{S}(T)| &= | A_{S}(S) \cap \mathcal{M}(S) | = | A_{S}(S)  \setminus (S \setminus \mathcal{M}(S)) | \\
%= | A_{S}(S) |  -  |A_{S}(S) \cap (S \setminus \mathcal{M}(S)) | 
& = |A_{S}(S)| - | S \setminus \mathcal{M}(S) | = | A_{S}(S) | - (|S| - m).
\end{align*}
Hence,
$$|A_{S}(S)| = |A_{S}(T)| + |S| - m$$
as desired.
\end{proof}
            
We are now in a position to derive an explicit formula for condensed Ricci curvature of an edge $xy \in E$ in terms of the graph parameters and the size of a maximum matching between the neighbor sets $N_x$ and $N_y$. 

\begin{theorem} \label{Thm:SRGmatching}
Let $G = (V, E)$ be strongly regular graph with parameters $(n, d, \alpha, \beta)$. Suppose $xy \in E$ with maximum matching $\mathcal{M}$ of size $m$ between $N_x$ and $N_y$. Then the condensed Ricci curvature
$$\Bbbk (x, y) = \frac{\alpha + 2}{d} - \frac{|N_x| - m}{d}.$$
\end{theorem}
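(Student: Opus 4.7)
The plan is to compute $W(m_x, m_y)$ exactly by producing a transport plan and a $1$-Lipschitz Kantorovich potential whose respective upper and lower bounds coincide, and then invoke $\Bbbk(x,y) = 2(1 - W(m_x, m_y))$. Strong regularity gives $V = \mathcal{N}_{xy}$, $|N_x| = |N_y| = d - \alpha - 1$, and diameter $2$, so every relevant bit of transport lives inside $\mathcal{N}_{xy}$. Writing $U_x = N_x \setminus M_x$ and $U_y = N_y \setminus M_y$, both of size $|N_x| - m$, I would take the transport plan $\xi$ that keeps the overlap mass $\tfrac{1}{2d}$ fixed at each vertex of $\{x,y\} \cup \nabla_{xy}$, moves the excess $\tfrac{d-1}{2d}$ at $x$ directly to $y$, pushes $\tfrac{1}{2d}$ along each matching edge of $\mathcal{M}$, and pushes $\tfrac{1}{2d}$ between each pair in an arbitrary bijection $U_x \leftrightarrow U_y$. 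Berge's Lemma rules out any edge between $U_x$ and $U_y$ (such an edge would be an augmenting path), so each paired couple has distance exactly $2$ because $G$ has diameter $2$, and the total cost works out to $\tfrac{d-1-m+2|N_x|}{2d}$, bounding $W(m_x,m_y)$ from above.

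\textbf{Kantorovich potential.} For the matching lower bound, I would define $f \colon V \to \R$ using alternating paths in the bipartite graph between $N_x$ and $N_y$ with matching $\mathcal{M}$: set $f \equiv 0$ on $\{y\} \cup A_{N_y}(N_y)$, $f \equiv 1$ on $\{x\} \cup \nabla_{xy} \cup P_{xy} \cup A_{N_y}(N_x) \cup (N_y \setminus A_{N_y}(N_y))$, and $f \equiv 2$ on $N_x \setminus A_{N_y}(N_x)$. Checking the $1$-Lipschitz condition is routine for every edge inside $\mathcal{N}_{xy}$ except one joining $u \in N_x \setminus A_{N_y}(N_x)$ to $v \in A_{N_y}(N_y)$, where $f(u) - f(v) = 2$; the contrapositive of Lemma \ref{Lem:Lip} applied with $S = N_y$ and $T = N_x$ forbids exactly this configuration. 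Computing $\sum_z f(z)(m_x(z) - m_y(z))$ then gives a contribution $\tfrac{d-1}{2d}$ from $x$, while the $N_x$ and $N_y$ contributions collapse via Lemma \ref{Lem:CountingPaths} (which yields $|A_{N_y}(N_y)| - |A_{N_y}(N_x)| = |N_y| - m$) to produce the total $\tfrac{d-1-m+2|N_x|}{2d}$, matching the transport-plan bound.

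\textbf{Conclusion and main obstacle.} Equating the two bounds pins down $W(m_x,m_y) = \tfrac{d-1-m+2|N_x|}{2d}$, so $\Bbbk(x,y) = 2(1-W) = \tfrac{d+1+m-2|N_x|}{d}$, which coincides with the claimed $\tfrac{\alpha+2}{d} - \tfrac{|N_x|-m}{d}$ via the strong-regularity identity $|N_x| = d - \alpha - 1$. The hard part will be designing the potential $f$ so that the only edge that could violate the $1$-Lipschitz condition is precisely the configuration blocked by Lemma \ref{Lem:Lip}; once this is in place, Lemma \ref{Lem:CountingPaths} makes the upper and lower bounds meet essentially automatically.
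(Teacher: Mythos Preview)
Your proposal is correct and follows essentially the same route as the paper: the transport plan is identical (stay on $\{x,y\}\cup\nabla_{xy}$, ship the excess $x\to y$, push along $\mathcal{M}$, and 2-step the unmatched pairs via diameter~2 and Berge), and your Kantorovich potential is the paper's alternating-path potential up to a harmless relabeling of values (you use $\{0,1,2\}$ where the paper uses $\{-1,0,1\}$), with the Lipschitz check reduced via Lemma~\ref{Lem:Lip} and the bookkeeping closed via Lemma~\ref{Lem:CountingPaths} exactly as in the paper.
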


\begin{proof}
Let $ xy \in E $ and consider the core neighborhood decomposition of $G$ as in \eqref{Eqn:CoreNbrhd}. Let $H$ denote the induced bipartite subgraph consisting of all edges in $E$ between vertices in $N_x$ and $N_y$. Suppose that $\mathcal{M}$ is a maximum matching on $H$ of size $|\mathcal{M}| = m$. Assuming $\mathcal{M}$ is not a perfect matching, as above we denote the matched and unmatched vertices in $N_x$ and $N_y$ by $M_x, M_y$ and $U_x, U_y$, respectively. Note that $|U_x| = |U_y|$ since $|N_x| = |N_y|$ and $|M_x|=|M_y|$. For $i=1, \dots, |N_x| - m$, let $x_i^u \in N_x$ and $y_i^u \in N_y$ denote the unmatched vertices in $H$ and for $j=1, \dots, m$, let $x_j^m \in N_x$ and $y_j^m \in N_y$ denote the matched vertices in $H$. Up to reordering of indices we may assume $x_i^my_i^m \in \mathcal{M}$ for $i=1,\dots, m$.

Clearly, there are no edges in $E$ incident to unmatched vertices since $\mathcal{M}$ is a maximum matching so $x_i^uy_j^u \not\in E$ for all $1 \leq i,j \leq |N_x| - m$. But then $\rho(x_i^u,y_i^u) = 2$ in $G$ since $x_i^u$ and $y_i^u$ are not adjacent and $G$ has diameter $2$. Hence, for $1 \leq i \leq |N_x| -m$, we may pair unmatched vertices $x_i^u$ with $y_i^u$ along $2$-step paths in $G$. For simplicity in notation write $x_i^u y_i^u  \in \mathcal{M}^2$ to denote unmatched vertices $x_i^u \in U_x$ and $y_i^u \in U_y$ that have been paired along a $2$-step path. Together these matchings/pairings induce a transport plan that moves excess mass from $x$ to $y$ and that moves mass from $x_i^m \in M_x$,  $x_i^u \in U_x$ directly to their matched/paired neighbors $y_i^m \in M_y$, $y_i^u \in U_y$ along $1$-step and $2$-step paths, respectively. More precisely, consider the transport plan defined by
\begin{equation} \label{Eqn:SRGTransportPlan}
\xi(v,w)=
\begin{cases}
\frac{1}{2} - \frac{1}{2d} & v = x, w = y \\
\frac{1}{2d} & v = w \in \nabla_{xy} \cup \{x\} \cup \{y\}\\
\frac{1}{2d} & vw \in \mathcal{M}\\
\frac{1}{2d} & vw \in \mathcal{M}^2\\
0 & \text{otherwise}.
\end{cases}
\end{equation}
Noting that $|N_x| = |N_y| = d - \alpha - 1$ it follows that the transport distance
\begin{align*}
W(m_x, m_y) \leq \sum_{v \in V} \sum_{w \in V} \rho(v,w)\xi(v,w) &= \left(\frac{1}{2} - \frac{1}{2d}\right) + m \cdot \frac{1}{2d}  + \left(|N_x| - m\right) \cdot 2 \cdot \frac{1}{2d}\\
& = \frac{1}{2} + \frac{1}{2d}\left((d - \alpha - 2) +(|N_x| - m)\right).
\end{align*}
Thus, the condensed Ricci curvature
\begin{align} \label{Eqn:SRGlb}
\Bbbk(x, y) = 2\kappa_{\frac{1}{2}}(x,y) &\geq 2 \left(1 - \left(\frac{1}{2} + \frac{1}{2d}\left((d - \alpha - 2) +(|N_x| - m)\right)\right)\right) \\
&=  \frac{\alpha + 2}{d} - \frac{|N_x| - m}{d}.\notag 
\end{align}

Next we define a Kantorovich potential function $f:V \to {\R}$ with respect to the matching $\mathcal{M}$ by
\begin{equation} \label{Eqn:SRGKantorovichPotential}
f(z) =
\begin{cases}
1 & z = x \\
1 & z \in N_x \setminus A_{N_y}(N_x)\\
-1 & z \in A_{N_y}(N_y)\\  
0 & \text{otherwise}
\end{cases}
\end{equation}
where $A_{N_y}(N_x)$ and $A_{N_y}(N_y)$ denote the vertices in $N_x$ and $N_y$, respectively,  that lie along alternating paths initiated in $N_y$. Taking $T=N_x$ and $S=N_y$ in Lemma $\ref{Lem:Lip}$, it follows that there are no edges between $N_x \setminus A_{N_y}(N_x)$ and $A_{N_y}(N_y)$. Hence, $f$ is a $1$-Lipschitz function on $V$. 

Let $a_x = |A_{N_y}(N_x)|$ and $a_y = |A_{N_y}(N_y)|$. Then 
$$|N_x \setminus A_{N_y}(N_x)| = |N_x| - a_x = d-\alpha-1-a_x$$
and by Lemma \ref{Lem:CountingPaths}  
$$a_y =  |A_{N_y}(N_y)| = |A_{N_y}(N_x)| + |N_y| - m = a_x + |N_x| - m.$$ 
Therefore, by the Kantorovich duality theorem, we find that the transport distance
\begin{align*}
W(m_x, m_y) & \geq \sum_{z \in V} f(z) (m_x(z) - m_y(z)) = \left(\frac{1}{2} - \frac{1}{2d}\right) + (d -\alpha - 1 - a_x) \cdot  \frac{1}{2d} + a_y \cdot \frac{1}{2d}  \\
& = \left(\frac{1}{2} - \frac{1}{2d}\right) + (d - \alpha - 1 - a_x) \frac{1}{2d} + (a_x + |N_x| - m) \frac{1}{2d} \\
& = \frac{1}{2} + \frac{1}{2d} \left((d - \alpha - 2) + (|N_x| - m)\right).
\end{align*}
Similar to the calculation \eqref{Eqn:SRGlb}, we have
\begin{equation} \label{Eqn:SRGub}
\Bbbk(x,y) = 2\kappa_{\frac{1}{2}}(x,y) \leq \frac{\alpha + 2}{d} - \frac{|N_x| - m}{d}.
\end{equation}
Hence, from the inequalities \eqref{Eqn:SRGlb} and \eqref{Eqn:SRGub}, it follows 
$$\Bbbk(x, y) = \frac{\alpha + 2}{d} - \frac{|N_x| - m}{d}.$$

Now if $\mathcal{M}$ is a perfect matching between $N_x$ and $N_y$, then the size of $\mathcal{M}$ is by definition $m =|N_x|=|N_y| = d - \alpha -1$. Moreover, since all vertices in $N_x$ and $N_y$ are matched by some edge in $\mathcal{M}$, it follows that there are no alternating paths initiated in $N_x$ or $N_y$ in the induced bipartite subgraph between vertices in $N_x$ and $N_y$. But then $A_{N_y}(N_x) = \varnothing$ and $A_{N_y}(N_y) = \varnothing$. Taking all of this into account, we see that the same transport plan \eqref{Eqn:SRGTransportPlan} coupled with the Kantorovich potential \eqref{Eqn:SRGKantorovichPotential} yield the equality
$$\Bbbk(x, y) = \frac{\alpha + 2}{d}$$
in the case of a perfect matching  between $N_x$ and $N_y$.
\end{proof}

%%%%%%%%%%%%%%%%%%%%%%%%%%%%%%%%%%%%%%%%%%%%%%%%%%%%%%%%%%%%%%%%%%%%%%%%%%%%%%%%%%%%%%%%%%%%%

\subsection{Strongly Regular Graphs of Girths 4 and 5} \label{Sec:SRGg4g5}
 
In this subsection we exploit the adjacency properties of strongly regular graphs $G=(V,E)$ to derive explicit formulas for the condensed Ricci curvature of strongly regular graphs of girths $4$ and $5$. Let $G=(V,E)$ be a strongly regular graph with parameters $(n,d,\alpha,\beta)$. It is well known \cite{CamLint} that if $G$ has girth $5$, then $\alpha =0$ and $\beta =1$, while a strongly regular graph $G$ with girth $4$ has $\alpha =0$ and $\beta \geq 2$. The following theorem is an immediate consequence of Theorem  \ref{Thm:SRGmatching} and the fact that the girth of a graph is its minimal cycle length.

\begin{theorem}
Let $G = (V, E)$ be strongly regular graph with parameters $(n, d, \alpha, \beta)$. If the girth of $G$ is $5$, then the condensed Ricci curvature
$$\Bbbk(x ,y) = \frac{3}{d} - 1$$ 
for all edges $xy \in E$.
\end{theorem}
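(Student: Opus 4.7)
The plan is to obtain this result as an immediate corollary of Theorem~\ref{Thm:SRGmatching} by pinning down the three inputs to formula \eqref{Eqn:CurvFormulaIntro}: the value of $\alpha$, the cardinality $|N_x|$, and the size $m$ of a maximum matching between $N_x$ and $N_y$. Each of these is forced by the girth-$5$ hypothesis.

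First I would note that if $G$ has girth $5$, then $G$ contains no $3$-cycles, so no two adjacent vertices share a common neighbor; that is, $\nabla_{xy} = \varnothing$ for every edge $xy \in E$, and hence $\alpha = 0$. This immediately yields $|N_x| = d - \alpha - 1 = d - 1$, taking care of the first two ingredients.

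Next I would argue that the maximum matching between $N_x$ and $N_y$ has size $m = 0$, because in fact there are no edges at all between $N_x$ and $N_y$. Suppose toward contradiction that $u \in N_x$ and $v \in N_y$ with $uv \in E$. By definition, $u \neq y$ and $u$ is not adjacent to $y$, while $v \neq x$ and $v$ is not adjacent to $x$; in particular, the vertices $x, u, v, y$ are distinct. Together with the edges $xu$, $uv$, $vy$, $yx$ this produces a $4$-cycle in $G$, contradicting the girth-$5$ hypothesis. Therefore the induced bipartite subgraph between $N_x$ and $N_y$ is edgeless and its maximum matching has size $m = 0$.

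Finally, I would substitute $\alpha = 0$, $|N_x| = d - 1$, and $m = 0$ into Theorem~\ref{Thm:SRGmatching} to obtain
\begin{equation*}
\Bbbk(x,y) = \frac{\alpha + 2}{d} - \frac{|N_x| - m}{d} = \frac{2}{d} - \frac{d-1}{d} = \frac{3}{d} - 1,
\end{equation*}
which is the claimed formula. There is essentially no obstacle here: the only nontrivial input is the cycle-counting argument that forbids edges between $N_x$ and $N_y$, and this is a one-line consequence of the girth hypothesis combined with the definitions of $N_x$ and $N_y$. All analytic work (the Kantorovich duality pinching between the transport plan and the Lipschitz potential) is already absorbed into Theorem~\ref{Thm:SRGmatching}.
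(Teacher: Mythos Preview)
Your proposal is correct and follows essentially the same approach as the paper: both argue that an edge between $N_x$ and $N_y$ would create a $4$-cycle $xuvyx$, forcing $m=0$, and then invoke Theorem~\ref{Thm:SRGmatching} with $\alpha=0$. If anything, your write-up is slightly more careful in explicitly deriving $\alpha=0$ from the absence of triangles and in checking that the four vertices of the putative $4$-cycle are distinct.
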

            
\begin{proof}
Let $xy \in E$ and consider the core neighborhood decomposition of $G$ as in \eqref{Eqn:CoreNbrhd}. Let $x_0 \in N_x$ and suppose that $x_0y_0 \in E$ for some $y_0 \in N_y$. Then $xx_0y_0yx$ is a cycle of length $4$, contradicting the fact that $G$ has girth $5$. Thus, there are no edges between $N_x$ and $N_y$ so the maximum matching between $N_x$ and $N_y$ has size $m = 0$. Therefore, from Theorem \ref{Thm:SRGmatching} with $\alpha = 0$ and $m =0$, we find that the condensed Ricci curvature
$$\Bbbk(x, y) = \frac{3}{d} - 1.\vspace{-0.85cm}$$
\end{proof}

\begin{remark} The transportation plan \eqref{Eqn:SRGTransportPlan} and Kantorovich potential \eqref{Eqn:SRGKantorovichPotential} can be used to compute this curvature formula directly. Since $G$ is girth $5$, there are no edges between vertices in $N_x$ and $N_y$ so $\mathcal{M} = \varnothing$ and each vertex in $N_x$ can be paired to a vertex in $N_y$ through a $2$-step path. Moreover, if we consider induced bipartite subgraph $H$ between $N_x$ and $N_y$, the collection of alternating paths in $H$ that start in $N_y$ consists solely of trivial paths so $A_{N_y}(N_y) = N_y$ and $A_{N_y}(N_x) = \varnothing$.
\end{remark}

In the girth 4 case we appeal to Hall's theorem, which can be used to give necessary and sufficient conditions for perfect matchings on bipartite graphs. Let $H=(V,E)$ be a finite, bipartite graph with parts $S, T$. For any subset $W \subseteq S$, we define 
$$\Gamma_T(W) = \{ v \in T \ \big| \ vw \in E \ \text{for some} \ w \in W\}$$ 
to be the collection of all vertices $v \in T$ sharing an edge with some vertex in $W$. More simply put, $\Gamma_T(W)$ is the neighbor set of $W$ in $T$.

\begin{theorem}[Hall's Theorem] \label{Thm:HallThm}
Let $H = (V, E)$ be a finite, undirected, bipartite graph with parts $S, T$. Then there is a matching that covers $S$ if and only if 
$$|{\Gamma_T(W)}| \geq |{W}|$$ 
for every subset of vertices $W \subseteq S$.
\end{theorem}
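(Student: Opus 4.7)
The plan is to prove Hall's Theorem by leveraging the alternating-path machinery already developed in this section, specifically Berge's Lemma and the bookkeeping idea from Lemma~\ref{Lem:CountingPaths}. The forward direction is a one-line argument: if $\mathcal{M}$ is a matching covering $S$, then restricting $\mathcal{M}$ to any subset $W \subseteq S$ sends $W$ injectively into $T$, producing $|W|$ distinct neighbors and hence $|\Gamma_T(W)| \geq |W|$.

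For the reverse direction, I will argue by contrapositive. Suppose no matching covers $S$, and let $\mathcal{M}$ be a maximum matching; by assumption there exists an unmatched vertex $s_0 \in S$. Let $A$ denote the set of all vertices reachable from $s_0$ by $\mathcal{M}$-alternating paths (including $s_0$ itself as the trivial alternating path). Set $W = A \cap S$, so $s_0 \in W$, and $W' = A \cap T$. Three structural claims will drive the argument, each modeled on material already in the excerpt. First, $\Gamma_T(W) \subseteq W'$: any edge $vw \in E$ with $v \in W$ and $w \in T$ extends the alternating path reaching $v$ into one reaching $w$, by the same parity/extension argument that appears in Lemma~\ref{Lem:Lip}. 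Second, every $t \in W'$ is $\mathcal{M}$-matched; otherwise the alternating path from $s_0$ to $t$ is an augmenting path, contradicting the maximality of $\mathcal{M}$ by Berge's Lemma. Third, the assignment $t \mapsto \mathcal{M}(t)$ is a bijection from $W'$ onto $W \setminus \{s_0\}$, which is precisely the bijection established in the proof of Lemma~\ref{Lem:CountingPaths}, restricted to the component of the alternating-path structure rooted at $s_0$.

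Combining these three claims yields $|\Gamma_T(W)| = |W'| = |W| - 1 < |W|$, which violates Hall's condition with the explicit witness set $W$. This completes the contrapositive and hence the theorem.

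The main obstacle I anticipate is the careful verification of the third claim: one must show that the alternating path producing each matched $w \in W \setminus \{s_0\}$ necessarily ends with the matching edge incident to $w$, so that its penultimate vertex lies in $W'$ and the map $t \mapsto \mathcal{M}(t)$ is well-defined and two-sided invertible. The bipartite parity constraints — that edges leaving $S$ along such a path are non-matching while edges entering $S$ are matching, because the path begins at an unmatched vertex in $S$ — handle this cleanly, but the bookkeeping is where the argument can most easily go wrong. A secondary subtlety is the convention (already fixed in Definition~\ref{Def:Paths}) that $s_0$ counts as a trivial alternating path, so it contributes to $W$ but not to any matching edge; this is exactly what produces the decrement $|W|-1$ that breaks Hall's condition.
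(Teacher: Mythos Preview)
The paper does not supply its own proof of Hall's Theorem; it is merely stated as a classical result and then invoked in the proof of Theorem~\ref{Thm:SRGgirth4}. So there is nothing to compare against on the paper's side.

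That said, your proposal is correct and is in fact one of the standard proofs of Hall's Theorem via alternating paths and Berge's Lemma. It dovetails nicely with the machinery already set up in Lemmas~\ref{Lem:Lip} and~\ref{Lem:CountingPaths}, so including it would make the section self-contained. One small polish: in your final chain you write $|\Gamma_T(W)| = |W'|$, but Claim~1 as stated only gives $\Gamma_T(W) \subseteq W'$. The inequality $|\Gamma_T(W)| \leq |W'| = |W| - 1 < |W|$ is all the contrapositive requires, so either relax the equality to an inequality or add the one-line observation that every $t \in W'$ is adjacent to its predecessor on the alternating path from $s_0$, which lies in $W$, giving $W' \subseteq \Gamma_T(W)$ as well.
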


With Hall's theorem in hand we are able to establish a perfect matching between the neighbor sets $N_x$ and $N_y$ for strongly regular graphs with girth 4 as a consequence of the pigeonhole principle.
            
\begin{theorem} \label{Thm:SRGgirth4}
Let $G = (V, E)$ be strongly regular graph with parameters $(n, d, \alpha, \beta)$. If the girth of $G$ is $4$, then the condensed Ricci curvature
$$\Bbbk(x ,y) = \frac{2}{d}$$
for all edges $xy \in E$.
\end{theorem}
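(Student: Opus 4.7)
The plan is to reduce the statement to Theorem \ref{Thm:SRGmatching} by showing that the induced bipartite graph $H$ between $N_x$ and $N_y$ admits a perfect matching. Since $G$ has girth $4$, any edge $xy \in E$ satisfies $\alpha = 0$, and the formula from Theorem \ref{Thm:SRGmatching} specializes to $\Bbbk(x,y) = \frac{2}{d} - \frac{|N_x|-m}{d}$. Hence the result will follow once we establish a perfect matching between $N_x$ and $N_y$, for then $m = |N_x| = |N_y| = d-1$.

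The first concrete step is to determine the degree of each vertex of $H$. Fix $w \in N_x$; since $\alpha = 0$ the vertex $w$ is not adjacent to $y$, so by strong regularity $w$ and $y$ share exactly $\beta$ common neighbors. One of these is $x$, since $x$ is adjacent to both $w$ and $y$. Because $\nabla_{xy} = \varnothing$ (again from $\alpha = 0$), we have $\Gamma(y) \setminus \{x\} = N_y$, so the remaining $\beta - 1$ common neighbors of $w$ and $y$ all lie in $N_y$. Thus every vertex of $N_x$ has degree exactly $\beta - 1$ in $H$, and by symmetry the same holds for every vertex of $N_y$. Note that $\beta - 1 \geq 1$ because $\beta \geq 2$ for girth $4$ strongly regular graphs.

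Next I verify Hall's condition on $H$. Given any nonempty $W \subseteq N_x$, I double-count the edges of $H$ between $W$ and $\Gamma_{N_y}(W)$. From the $W$ side there are exactly $|W|(\beta-1)$ such edges, while from the $\Gamma_{N_y}(W)$ side there are at most $|\Gamma_{N_y}(W)|(\beta-1)$, since each vertex of $N_y$ has degree $\beta-1$ in $H$. Comparing these yields $|\Gamma_{N_y}(W)| \geq |W|$, so Hall's Theorem (Theorem \ref{Thm:HallThm}) produces a matching of $H$ that covers $N_x$. Since $|N_x| = |N_y|$, this matching is perfect, so $m = |N_x|$. Substituting $\alpha = 0$ and $m = |N_x|$ into Theorem \ref{Thm:SRGmatching} gives $\Bbbk(x,y) = 2/d$, and positivity is immediate.

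The main potential obstacle would be verifying Hall's condition without additional structural hypotheses on $\beta$, but because $H$ turns out to be biregular of common degree $\beta-1$, the pigeonhole-style double count above handles every subset $W$ uniformly. All other steps are bookkeeping with the core neighborhood decomposition and the known consequences of girth $4$ for strongly regular graphs.
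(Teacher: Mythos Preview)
Your proof is correct and follows essentially the same approach as the paper: establish that the induced bipartite graph $H$ between $N_x$ and $N_y$ is $(\beta-1)$-biregular, verify Hall's condition, and then invoke Theorem~\ref{Thm:SRGmatching} with $\alpha=0$ and $m=|N_x|$. The only cosmetic difference is that you phrase the Hall verification as a direct double count while the paper phrases the same inequality as a pigeonhole contradiction.
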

            
\begin{proof}
Let $xy \in E $ and consider the core neighborhood decomposition of $G$ as in \eqref{Eqn:CoreNbrhd}. Then since $\alpha = 0$, it follows that $G$ is triangle free. Suppose that $x_i \in N_x$. Then since $x_i$ is nonadjacent to $y$, it follows $x_i$ and $y$ have $\beta \geq 2$ common neighbors. Clearly, one such neighbor is $x$ so $x_i$ has $\beta - 1 \geq 1$ neighbors in $N_y$ since $\nabla_{xy} = \varnothing$. Similarly, each vertex $y_j \in N_y$ has $\beta - 1 \geq 1$ neighbors in $N_x$.

Now consider the induced bipartite subgraph $H=G[N_x \cup N_y]$ consisting of all edges in $E$ between vertices in $N_x$ and $N_y$. For sake of contradiction suppose that Let $X \subseteq N_x$ with $|X| = q > |\Gamma_{N_y}(X)| = k$. Since each vertex $x_i \in X \subseteq N_x$ has  $\beta - 1 \geq 1$ neighbors in $N_y$, it follows that there are $(\beta-1)q$ edges between $X$ and $\Gamma_{N_y}(X)$. Therefore, by the generalized pigeonhole principle there exists some $w \in \Gamma_{N_y}(X)$ with 
$$|\Gamma(w)| \geq \ceil*{\frac{(\beta-1)q} {k}} > \ceil*{\frac{(\beta-1)q}{q}}  = \beta - 1$$
since $q > k$. This contradicts the fact that every vertex in $N_y$ has exactly $(\beta - 1)$ neighbors in $N_x$. Thus, $|X| \leq |\Gamma_{N_y}(X)|$ for all subsets $X \subset N_x$ and therefore since $|N_x| = |N_y|$ it follows that there is a perfect matching in $H$ by Hall's Theorem. Noting that $\alpha =0$, it follows from Theorem \ref{Thm:SRGmatching} with $\alpha = 0$ and $m = |N_x|$ that the condensed Ricci curvature
$$\Bbbk(x, y) = \frac{2}{d}.\vspace{-0.85cm}$$
\end{proof}

%%%%%%%%%%%%%%%%%%%%%%%%%%%%%%%%%%%%%%%%%%%%%%%%%%%%%%%%%%%%%%%%%%%%%%%%%%%%%%%%%%%%%%%%%%%%%

\bibliographystyle{plain}
\bibliography{coarsericci}

%%%%%%%%%%%%%%%%%%%%%%%%%%%%%%%%%%%%%%%%%%%%%%%%%%%%%%%%%%%%%%%%%%%%%%%%%%%%%%%%%%%%%%%%%%%%%

\end{document}